\newcommand{\norm}[1]{\left\Vert#1\right\Vert}
\newcommand{\abs}[1]{\left\vert#1\right\vert}
\newcommand{\Set}[1]{\ensuremath{ \left\{ #1 \right\} }}
\newcommand{\set}[1]{\ensuremath{ \{ #1 \} }}
\DeclareMathOperator*{\esssup}{ess\,sup}
\DeclareMathOperator*{\essinf}{ess\,inf}
\DeclareMathOperator*{\argmin}{arg\,min}
\DeclareMathOperator*{\aff}{aff}
\DeclareMathOperator*{\conv}{conv}
\DeclareMathOperator*{\diam}{diam}
\DeclareMathOperator*{\points}{ext}
\DeclareMathOperator*{\Sym}{S_N}
\begin{document}

\title{Brouwer Fixed Point Theorem in $\left(L^0\right)^d$}
\ArXiV{1305.2890}
\thanksColleagues{We thank Asgar Jamneshan for fruitful discussions.}

\author[a,1,s]{Samuel Drapeau}
\author[a,2,t]{Martin Karliczek}
\author[b,3,s]{Michael Kupper}
\author[a,4,v]{Martin Streckfu\ss}
\address[a]{Humboldt-Universit\"at Berlin, Unter den Linden 6, 10099 Berlin, Germany}
\address[b]{Universit\"at Konstanz, Universit\"atsstra\ss e 10, 78464 Konstanz, Germany}
\eMail[1]{drapeau@math.hu-berlin.de}
\eMail[2]{karliczm@math.hu-berlin.de}
\eMail[3]{kupper@uni-konstanz.de}
\eMail[4]{streckfu@math.hu-berlin.de}


\myThanks[s]{Funding: MATHEON project E.11}
\myThanks[t]{Funding: Konsul Karl und Dr. Gabriele Sandmann Stiftung}
\myThanks[v]{Funding: Evangelisches Studienwerk Villigst}

\date{September 12, 2013}

\abstract{
	 The classical Brouwer fixed point theorem states that in $\mathbb{R}^d$ every continuous function from a convex, compact set on itself has a fixed point.
	 For an arbitrary probability space, let $L^0=L^0(\Omega,\mathcal{A},P)$ be the set of random variables.
	 We consider $(L^0)^d$ as an $L^0$-module and show that local, sequentially continuous functions on $L^0$-convex, closed and bounded subsets have a fixed point which is measurable by construction.
}
\keyWords{Conditional Simplex, Fixed Points in $(L^0)^d$, Brouwer}
\keyAMSClassification{47H10, 13C13, 46A19, 60H25}

\maketitle

\section*{Introduction}\label{s:intro}

The Brouwer fixed point theorem states that a continuous function from a compact and convex set in $\mathbb{R}^d$ to itself has a fixed point. 
This result and its extensions play a central role in  Analysis, Optimization and Economic Theory among others. 
 To show the result one approach is to consider functions on simplexes first and use Sperner's lemma.

Recently, \citet{cheridito2012}, inspired by the theory developed by \citet{kupper03} and \citet{guo01}, studied $(L^0)^d$ as an $L^0$-module, discussing concepts like linear independence, 
$\sigma$-stability, locality and $L^0$-convexity.
Based on this, we define affine independence and conditional simplexes in $(L^0)^d$.
Showing first a result similar to  Sperner's Lemma, we obtain a fixed point for local, sequentially continuous functions on conditional simplexes.
From the measurable structure of the problem, it turns out that we have to work with local, measurable labeling functions.
To cope with this difficulty and to maintain some uniform properties, we subdivide the conditional simplex barycentrically.
We then prove the existence of a measurable completely labeled conditional simplex, contained in
the original one, which turns out to be a suitable $\sigma$-combination of
elements of the barycentric subdivision along a partition of $\Omega$.
Thus, we can construct a sequence of conditional simplexes converging to a point.
By applying always the same rule of labeling using the locality of the function, we show that this point is a fixed point.
Due to the measurability of the labeling function the fixed point is measurable by construction.
Hence, even though we follow the approach of $\mathbb{R}^d$ (cf. \cite{Border}) we do not need any measurable selection argument.

In Probabilistic Analysis theory the problem of finding random fixed points of random operators is an important issue.
Given $\mathcal{C}$, a compact convex set of a Banach space, a continuous random operator is a function $R \colon \Omega \times \mathcal{C}\to \mathcal{C}$ satisfying
\begin{enumerate}[label=\textit{(\roman*)}]
\item $R(.,x) \colon \Omega \to \mathcal{C}$ is a random variable for any fixed $x \in \mathcal{C}$,
\item $R(\omega,.) \colon \mathcal{C}\to \mathcal{C}$ is a continuous function for any fixed $\omega \in \Omega$.
\end{enumerate}
For $R$ there exists a random fixed point which is a random variable $\xi \colon \Omega \to \mathcal{C}$ such that
$\xi(\omega)=R(\omega,\xi(\omega))$ for any $\omega$ (cf. \cite{bharucha}, \cite{shahzad}, \cite{fierro}).
In contrast to this $\omega$-wise consideration, our approach is completely within the theory of $L^0$.
All objects and properties are therefore defined in that language and proofs are done with $L^0$-methods.
Moreover, the connection between continuous random operators on $\mathbb{R}^d$ and
 sequentially continuous functions on $(L^0)^d$  is not entirely clear.
 
An application, though not studied in this paper, is for instance possible in economic theory or optimization in the context of \cite{kupper08}.
Therein methods from convex analysis are used to obtain equilibrium results for translation invariant utility functionals on $(L^0)^d$.
Without translation invariance these methods  fail, and will be replaced by fixed point arguments in an ongoing work.
Thus, our result is helpful to develop the theory of non-translation invariant preference functionals mapping to $L^0$.

The present paper is organized as follows. In the first chapter we present the basic concepts concerning $(L^0)^d$ as an $L^0$-module. We define conditional simplexes and examine their basic properties.
In the second chapter we define measurable labeling functions and show the Brouwer fixed point theorem for conditional simplexes 
via a construction in the spirit of Sperner's lemma.
In the third chapter, we show a fixed point result for $L^0$-convex, bounded and sequentially closed sets in $(L^0)^d$.
With this result at hand, we present the topological implications known from the real-valued case. On the one hand, we show 
the impossibility of contracting a ball to a sphere in $(L^0)^d$ and on the other hand, an intermediate value theorem in $L^0$.

\section{Conditional Simplex} \label{sec:02}
For a probability space $(\Omega,\mathcal{A},P)$, let $L^0=L^0(\Omega,\mathcal{A},P)$ be the space of all 
$\mathcal{A}$-measurable random variables, where $P$-almost surely equal random variables are identified. 
In particular, for $X,Y \in L^0$, the relations $X\geq Y$ and $X>Y$  have to be understood $P$-almost surely. 
The set $L^0$ with the $P$-almost everywhere order is a lattice ordered ring, and every nonempty subset $\mathcal{C}\subseteq L^0$ has a least upper bound $\esssup \mathcal{C}$ 
and a greatest lower bound $\essinf \mathcal{C}$. 
For $m\in\mathbb{R}$, we denote the constant random variable $m 1_{\Omega}$ by $m$. Further, we define the sets $L^0_+=\set{X \in L^0 \colon X \geq 0}$, $L^0_{++}=\set{X \in L^0 
\colon X>0}$ and $\mathcal{A}_+=\set{A\in \mathcal{A} \colon P(A)>0}$. 
The set of random variables with values in a set $M \subseteq \mathbb{R}$ is denoted by $M(\mathcal{A})$. 
For example, $\{1,\dots,r\}(\mathcal{A})$ is the set of $\mathcal{A}$-measurable functions with values in $\{1,\dots,r\}\subseteq \mathbb{N}$, 
$[0,1](\mathcal{A})=\set{Z \in L^0 \colon 0\leq Z\leq 1}$ and $(0,1)(\mathcal{A})=\set{Z \in L^0 \colon 0< Z< 1}$.

The \emph{convex hull} of $X_1,\dots, X_N \in (L^0)^d$, $N\in \mathbb{N}$, is defined as 
\begin{align*}
    \conv\left(X_1,\dots,X_N\right)=\Set{\sum_{i=1}^N \lambda_i X_i \colon \lambda_i \in L^0_+, \sum_{i=1}^N \lambda_i=1}. 
\end{align*}
An element $Y=\sum_{i=1}^N \lambda_i X_i$ such that  $\lambda_i>0$ for all $i \in I \subseteq\{1,\dots,N\}$
is called a \emph{strict convex combination} of $\{X_i \colon i \in I\}$.

The $\sigma$-\emph{stable hull} of a set $\mathcal{C}\subseteq (L^0)^d$ is defined as
\begin{align*}
    \sigma\left(\mathcal{C}\right)=\Set{\sum_{i \in\mathbb{N}} 1_{A_i} X_i \colon X_i \in \mathcal{C}, (A_i)_{i\in\mathbb{N}} \text{ is a \emph{partition}}}, 
\end{align*}
where a partition is a countable family $(A_i)_{i \in \mathbb{N}} \subseteq \mathcal{A}$ such that $P(A_i\cap A_j)=0$ for $i\neq j$ and $P(\bigcup_{i \in \mathbb{N}} A_i)=1$. 
We call a nonempty set $\mathcal{C}$ $\sigma$-\emph{stable} if it is equal to $\sigma(\mathcal{C})$.
For a $\sigma$-stable set $\mathcal{C}\subseteq (L^0)^d$  a function $f \colon\mathcal{C} \to (L^0)^d$  is called \emph{local} 
if $f(\sum_{i\in \mathbb{N}}1_{A_i}X_i)=\sum_{i \in \mathbb{N}}1_{A_i}f(X_i)$ for every partition $(A_i)_{i \in \mathbb{N}}$ and $X_i \in \mathcal{C}$, $i\in\mathbb{N}$.
For $\mathcal{X},\mathcal{Y} \subseteq (L^0)^d$, we call a function $f \colon \mathcal{X}\to \mathcal{Y}$ \emph{sequentially continuous} if for every 
sequence $(X_n)_{n \in \mathbb{N}}$ in $\mathcal{X}$ converging to $X \in \mathcal{X}$ $P$-almost-surely 
it holds that $f(X_n)$ converges to $f(X)$ $P$-almost surely.
Further, the $L^0$\emph{-scalar product} and $L^0$\emph{-norm} on $(L^0)^d$ are defined as
\begin{equation*}
    \langle X,Y \rangle=\sum_{i=1}^d X_iY_i \quad \text{and} \quad \norm{X}=\langle X,X \rangle ^{\frac{1}{2}}.
\end{equation*}
We call $\mathcal{C}\subseteq (L^0)^d$ \emph{bounded} if $\esssup_{X \in \mathcal{C}} \norm{X} \in L^0$ and \emph{sequentially closed} if it contains all $P$-almost sure limits of sequences in $\mathcal{C}$.
Further, the diameter of $\mathcal{C}\subseteq (L^0)^d$ is defined as $\diam( \mathcal{C})=\esssup_{X,Y \in \mathcal{C}} \norm{X-Y}$.

\begin{definition} \label{d:affine}
    Elements $X_1,\ldots, X_N$ of $(L^0)^d$, $N\in\mathbb{N}$, are said to be \emph{affinely independent}, if either $N=1$ or $N>1$ and $\set{X_i-X_N}_{i=1}^{N-1}$ are \emph{linearly independent}, that is
    \begin{align}\label{affin1}
        \sum_{i=1}^{N-1} \lambda_i (X_i-X_N)=0\quad \text{implies} \quad \lambda_1=\cdots =\lambda_{N-1}=0,
    \end{align}
    where $\lambda_1,\ldots,\lambda_{N-1} \in L^0$.
\end{definition}
The definition of affine independence is equivalent to
\begin{align}\label{affin2}
    \sum_{i=1}^N\lambda_i X_i=0 \; \text{ and } \sum_{i=1}^N \lambda_i=0 \quad \text{implies}\quad \lambda_1=\cdots=\lambda_N=0.
\end{align}

Indeed, first, we show that \eqref{affin1} implies \eqref{affin2}. 
Let $\sum_{i=1}^N \lambda_i X_i=0$ and $\sum_{i=1}^N \lambda_i=0$. 
Then, $\sum_{i=1}^{N-1} \lambda_i (X_i-X_N)=\lambda_N X_N+\sum_{i=1}^{N-1} \lambda_i X_i=0$. 
By assumption \eqref{affin1}, $\lambda_1=\cdots=\lambda_{N-1}=0$, thus also $\lambda_N=0$. 
To see that \eqref{affin2} implies \eqref{affin1}, let $\sum_{i=1}^{N-1} \lambda_i (X_i-X_N)=0$. 
With $\lambda_N=-\sum_{i=1}^{N-1}\lambda_i$, it holds $\sum_{i=1}^N \lambda_i X_i=\lambda_N X_N+\sum_{i=1}^{N-1} \lambda_i X_i=\sum_{i=1}^{N-1} \lambda_i (X_i-X_N)=0$. 
By assumption \eqref{affin2}, $\lambda_1=\cdots=\lambda_{N}=0$.

\begin{remark}
    We observe that if $(X_i)_{i=1}^N\subseteq (L^0)^d$ are affinely independent then 
    $(\lambda X_i)_{i=1}^N$, for $\lambda \in L^0_{++}$, and $(X_i+Y)_{i=1}^N$, for $Y \in (L^0)^d$, are affinely independent.
    Moreover, if a family $X_1,\dots,X_N$ is affinely independent then also $1_BX_1,\dots,1_B X_N$ are affinely independent 
    on $B\in \mathcal{A}_{+}$, which means from $\sum_{i=1}^N 1_B\lambda_i X_i=0$ and $\sum_{i=1}^N 1_B\lambda_i=0$ always follows $1_B\lambda_i=0$ for all $i=1,\dots,N$. 
\end{remark}

\begin{definition}
    A \emph{conditional simplex} in $(L^0)^d$ is a set of the form
    \begin{align*}
        \mathcal{S}=\conv(X_1,\ldots,X_N)
    \end{align*}
    such that $X_1,\ldots,X_N \in (L^0)^d$ are affinely independent. 
    We call $N\in\mathbb{N}$ the dimension of $\mathcal{S}$.
\end{definition}

\begin{remark}
    In a conditional simplex $\mathcal{S}=\conv(X_1,\dots,X_N)$, the coefficients of convex combinations are unique in the sense that
    \begin{align}\label{wichtig}
        \sum_{i=1}^N \lambda_i X_i=\sum_{i=1}^N \mu_i X_i \text{ and } \sum_{i=1}^N\lambda_i=\sum_{i=1}^N \mu_i=1 \quad \text{implies} \quad \lambda_i=\mu_i \text{ for all } i=1,\ldots,N.
    \end{align}
    Indeed, since $\sum_{i=1}^N (\lambda_i-\mu_i)X_i=0$ and $\sum_{i=1}^N (\lambda_i-\mu_i)=0$, it follows from \eqref{affin2} that
    $\lambda_i-\mu_i=0$ for all $i=1,\dots,N$. 
\end{remark}

\begin{remark}\label{lpremark}
    Note that the present setting --- $L^0$-modules and the sequential $P$-almost sure convergence --- is of local nature.
    This is for instance, not the case for subsets of $L^p$ or the convergence in the $L^p$-norm for $1\leq p <\infty$.
    First, $L^p$ is not closed under multiplication and hence neither a ring nor a module over itself so that we can not even speak about affine independence.
    Second, it is mostly not a $\sigma$-stable subspace of $L^0$.
   However, for  a conditional simplex $\mathcal{S}=\conv(X_1,\ldots,X_N)$ in $(L^0)^d$ such that any $X_k$ is in $(L^p)^d$, it holds that $\mathcal{S}$ is uniformly bounded by $N \sup_{k=1,\ldots,N}\norm{X_k}\in L^p$.
    This uniform boundedness yields that any $P$-almost sure converging sequence in $\mathcal{S}$ is also converging in the $L^p$-norm for $1\leq p<\infty$ due to the dominated convergence theorem.
    This shows how one can translate results from $L^0$ to $L^p$.
\end{remark}

Since a conditional simplex is a convex hull it is in particular $\sigma$-stable.
In contrast to a simplex in $\mathbb{R}^d$ the representation of $\mathcal{S}$ as a convex hull of affinely independent elements is unique but up to $\sigma$-stability.

\begin{proposition}
    Let $(X_i)_{i=1}^N$ and $(Y_i)_{i=1}^N$ be families in $(L^0)^d$ such that $\sigma(X_1,\ldots,X_N)=\sigma(Y_1,\ldots,Y_N)$.
    Then $\conv(X_1,\dots, X_N)=\conv(Y_1,\dots,Y_N)$. 
    Moreover,  $(X_i)_{i=1}^N$ are affinely independent if and  only if $(Y_i)_{i=1}^N$ are affinely independent.

    If $\mathcal{S}$ is a conditional simplex such that $\mathcal{S}=\conv(X_1,\dots, X_N)=\conv(Y_1,\dots,Y_N)$,
    then it holds $\sigma(X_1,\dots,X_N)=\sigma(Y_1,\dots,Y_N)$.
\end{proposition}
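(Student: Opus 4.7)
The plan is to establish the three claims in sequence, leaning on two recurring devices: every element of a $\sigma$-hull of a finite list can be regrouped as an $N$-term convex combination with indicator coefficients; and uniqueness \eqref{wichtig} of the convex representation inside a conditional simplex. For the equality of convex hulls, I would note that any $Y_j \in \sigma(Y_1,\ldots,Y_N) = \sigma(X_1,\ldots,X_N)$ can be written as $\sum_{k=1}^N 1_{B_k} X_k$, hence lies in $\conv(X_1,\ldots,X_N)$; symmetry then closes this part.

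For the affine independence equivalence, by symmetry only one direction is needed. A preliminary observation is that affine independence of $(X_i)_{i=1}^N$ forces pointwise distinctness $X_i \neq X_k$ almost surely for $i \neq k$: coincidence on a set $A$ of positive measure would supply a witness against \eqref{affin2} via $\lambda_i = 1_A$, $\lambda_k = -1_A$. I would then combine the two $\sigma$-representations $Y_j = \sum_k 1_{A_k^j} X_k$ and $X_i = \sum_j 1_{B_j^i} Y_j$ into a single identity for $X_i$; applying \eqref{wichtig} in $\conv(X_1,\ldots,X_N)$ forces $\sum_j 1_{B_j^i \cap A_k^j} = \delta_{ik}$, and together with pointwise distinctness this makes the pointwise assignment $j \mapsto k(j,\omega)$ defined by $Y_j(\omega) = X_{k(j,\omega)}(\omega)$ a permutation of $\{1,\ldots,N\}$ for almost every $\omega$. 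Finally, given $\sum_j \mu_j Y_j = 0$ with $\sum_j \mu_j = 0$, substituting the representation of $Y_j$ and invoking \eqref{affin2} for the $X_i$'s yields $\sum_j \mu_j 1_{A_k^j} = 0$ for every $k$; multiplying by $1_{A_k^{j_0}}$, summing over $k$, and using the permutation property to kill every off-diagonal term collapses this to $\mu_{j_0} = 0$.

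For the third statement both generators are affinely independent, since $\mathcal{S}$ is presented as a conditional simplex in two ways. I would expand $Y_j = \sum_i \alpha_i^j X_i$ and $X_i = \sum_k \beta_k^i Y_k$ with $L^0_+$-coefficients summing to $1$, substitute, and apply \eqref{wichtig} to the $(Y_k)$-expansion to obtain $\sum_i \alpha_i^j \beta_k^i = \delta_{jk}$. Non-negativity forces $\alpha_i^j \beta_k^i = 0$ for $k \neq j$, so $X_i = Y_j$ on $\{\alpha_i^j > 0\}$; the pointwise distinctness of the $X_i$'s then allows at most one index $i$ to satisfy $\alpha_i^j(\omega) > 0$ at each $\omega$, and $\sum_i \alpha_i^j = 1$ upgrades each $\alpha_i^j$ to a $\{0,1\}$-valued indicator. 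Hence $Y_j = \sum_i 1_{A_i^j} X_i \in \sigma(X_1,\ldots,X_N)$, and symmetry concludes.

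The main obstacle I anticipate is the middle part: converting the nested indicator identities into the scalar conclusion $\mu_{j_0} = 0$ without resorting to a measurable-selection step. The key leverage is that uniqueness \eqref{wichtig} together with almost sure distinctness of the generators hands one a measurable permutation structure ``for free,'' which is exactly what turns the system $\sum_j \mu_j 1_{A_k^j} = 0$ into $\mu_{j} = 0$ and, in parallel, forces the coefficients $\alpha_i^j$ to be $\{0,1\}$-valued in the third part.
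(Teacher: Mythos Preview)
Your argument is correct for all three parts, but parts 2 and 3 follow a genuinely different route from the paper.

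For the equivalence of affine independence, the paper argues by a Steinitz--exchange type dimension count: it shows that whenever $\sigma(X_1,\ldots,X_N)=\sigma(Z_1,\ldots,Z_M)$ with the $X_i$ affinely independent and $1_A\aff(X_1,\ldots,X_N)\subseteq 1_A\aff(Z_1,\ldots,Z_M)$ on some $A\in\mathcal{A}_+$, one must have $M\geq N$; then affine dependence of the $Y_i$ would allow $1_A\aff(Y_1,\ldots,Y_N)=1_A\aff(Y_2,\ldots,Y_N)$ on a set of positive measure, contradicting the bound. Your approach instead extracts a measurable permutation directly from the two nested $\sigma$-representations via \eqref{wichtig}, and then transports the vanishing of the $\mu_j$ through that permutation. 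This is shorter and avoids the exchange lemma entirely; the paper's detour, however, yields the auxiliary dimension statement as a by-product.

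For the third part, the paper proves an intrinsic characterization: $X\in\sigma(X_1,\ldots,X_N)$ if and only if $X$ is not a strict convex combination of two points of $\mathcal{S}\setminus\{X\}$; since this property depends only on $\mathcal{S}$, the two $\sigma$-hulls coincide. Your argument is again purely algebraic --- composing the two convex representations, applying \eqref{wichtig} on the $Y$-side, and using non-negativity plus pointwise distinctness to force each $\alpha_i^j$ into $\{0,1\}$. Both arguments assume (as the paper does) that both generating families are affinely independent. Your route is more economical; the paper's route, on the other hand, delivers the extremal-point characterization that is reused immediately afterwards to \emph{define} $\points(\mathcal{S})$, so its detour is not wasted.
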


\begin{proof}
    Suppose $\sigma(X_1,\dots,X_N)=\sigma(Y_1,\dots,Y_N)$.
    For $i=1,\dots,N$, it holds
    \begin{equation*}
        X_i \in \sigma(X_1,\dots,X_N)=\sigma(Y_1,\dots,Y_N)\subseteq \conv(Y_1,\dots, Y_N).
    \end{equation*}
    Therefore, $\conv(X_1,\dots, X_N)\subseteq \conv(Y_1,\dots,Y_N)$ and the reverse inclusion holds analogously.

    Now, let $(X_i)_{i=1}^N$ be affinely independent and $\sigma(X_1,\ldots,X_N)=\sigma(Y_1,\ldots,Y_N)$. We want to show that $(Y_i)_{i=1}^N$ are affinely independent. 
    To that end, we define the affine hull
    \begin{equation*}
        \aff(X_1,\dots,X_N)=\Set{\sum_{i=1}^N \lambda_i X_i \colon \lambda_i \in L^0, \sum_{i=1}^N\lambda_i=1}.
    \end{equation*}

    First, let $Z_1,\ldots,Z_M \in (L^0)^d$, $M\in\mathbb{N}$, such that $\sigma(X_1,\dots,X_N)=\sigma(Z_1,\dots,Z_M)$.
    We show that if $1_A \aff(X_1,\dots,X_N)\subseteq 1_A \aff(Z_1,\dots,Z_M)$ for $A\in\mathcal{A}_{+}$ and $X_1,\ldots,X_N$ are affinely independent then $M\geq N$.
    Since $X_i\in \sigma(X_1,\dots,X_N)=\sigma(Z_1,\dots, Z_M)\subseteq \aff(Z_1,\dots,Z_M)$, we have 
    $\aff(X_1,\dots,X_N)\subseteq \aff(Z_1,\dots,Z_M)$. 
    Further, it holds that  $X_1=\sum_{i=1}^M 1_{B^1_i}Z_i$ for a partition $(B^1_i)_{i=1}^M$ and hence there exists
    at least one $B^1_{k_1}$ such that $A^1_{k_1}:=B^1_{k_1} \cap A \in\mathcal{A}_{+}$, 
    and  $1_{A^1_{k_1}}X_1=1_{A^1_{k_1}} Z_{k_1}$.
    Therefore, 
    \begin{align*}
        1_{A^1_{k_1}} \aff(X_1,\dots,X_N)\subseteq 1_{A^1_{k_1}}\aff(Z_1,\dots,Z_M)=1_{A^1_{k_1}}\aff(\{X_1,Z_1,\dots,Z_M\}\setminus \{Z_{k_1}\}).
    \end{align*}
    For $X_2=\sum_{i=1}^M 1_{A^2_i}Z_i$ we find a set $A^2_k$, such that $A^2_{k_2}=A^2_k \cap A^1_{k_1}\in\mathcal{A}_{+}$,  $1_{A^2_{k_2}} X_2=1_{A_{k_2}^2} Z_{k_2}$ and $k_1\neq k_2$.
    Assume to the contrary $k_2=k_1$, then there exists a set  $B \in \mathcal{A}_+$, 
    such that $1_BX_1=1_BX_2$ which is a contradiction to the affine independence of $(X_i)_{i=1}^N$. Hence, we can again substitute $Z_{k_2}$ by $X_2$ on $A_{k_2}^2$.
    Inductively, we find  $k_1,\ldots,k_N$
    such that
    \begin{align*}
        1_{A_{k_N}} \aff(X_1,\dots,X_N)\subseteq 1_{A_{k_N}} \aff(\{X_1,\dots,X_N,Z_1,\dots,Z_M\}\setminus \{Z_{k_1},\dots Z_{k_N}\})
    \end{align*}
    which shows $M\geq N$.
    Now suppose $Y_1,\dots,Y_N$ are not affinely independent. 
    This means, there exist $(\lambda_i)_{i=1}^N$ such that $\sum_{i=1}^N \lambda_i Y_i=\sum_{i=1}^N\lambda_i=0$ 
    but not all coefficients $\lambda_i$ are zero, without loss of generality, $\lambda_1>0$ on $A\in\mathcal{A}_{+}$. 
    Thus, $1_A Y_1=-1_A\sum_{i=2}^N \frac{\lambda_i}{\lambda_1} Y_i$ and
    it holds $1_A \aff(Y_1,\dots,Y_N)=1_A \aff(Y_2,\dots,Y_N)$.  
    To see this, consider $1_AZ=1_A\sum_{i=1}^N \mu_i Y_i \in 1_A \aff(Y_1,\dots,Y_N)$ which means $1_A\sum_{i=1}^N\mu_i=1_A$.
    Thus, inserting for $Y_1$,
    \begin{align*}
        1_AZ=1_A\left[\sum_{i=2}^N \mu_i Y_i-\mu_1 \sum_{i=2}^N \frac{\lambda_i}{\lambda_1} Y_i \right]=1_A\left[\sum_{i=2}^N \left(\mu_i-\mu_1 \frac{\lambda_i}{\lambda_1}\right)Y_i\right].
    \end{align*} 
    Moreover, 
    \begin{align*}
        1_A\left[ \sum_{i=2}^N \left(\mu_i-\mu_1 \frac{\lambda_i}{\lambda_1}\right)\right]&=1_A\left[\sum_{i=2}^N\mu_i \right]+1_A\left[-\frac{\mu_1}{\lambda_1} \sum_{i=2}^N \lambda_i\right]
        = 1_A(1-\mu_1)+1_A\frac{\mu_1}{\lambda_1}\lambda_1=1_A.
    \end{align*}
    Hence, $1_AZ \in 1_A \aff(Y_2,\dots,Y_N)$.
    It follows that $1_A \aff(X_1,\dots,X_N)=1_A \aff(Y_1,\dots,Y_N)=1_A \aff(Y_2,\dots,Y_N)$. This is a contradiction to the former part of the proof (because $N-1\not\geq N$).

    Next, we characterize \emph{extremal} points in $\mathcal{S}=\conv(X_1,\dots,X_N)$.
    To this end, we show $X \in \sigma(X_1,\dots,X_N)$ if and only if 
    there do not exist  $Y$ and $Z$  in $\mathcal{S}\setminus \{X\}$  and $\lambda \in (0,1)(\mathcal{A})$ such that
    $\lambda Y +(1-\lambda) Z=X$.
    Consider $X \in \sigma(X_1,\dots,X_N)$ which is $X=\sum_{k=1}^N 1_{A_k} X_k$ for a partition $(A_k)_{k\in\mathbb{N}}$. 
    Now assume to the contrary that we find $Y=\sum_{k=1}^N \lambda_k X_k$ and $Z=\sum_{k=1}^N \mu_k X_k$ in $\mathcal{S}\setminus \{X\}$ such that 
    $X=\lambda Y+(1-\lambda)Z$.
    This means that $X=\sum_{k=1}^N (\lambda \lambda_k+(1-\lambda)\mu_k)X_k$. 
    Due to uniqueness of the coefficients (cf. \eqref{wichtig}) in a conditional simplex we have 
    $\lambda\lambda_k+(1-\lambda)\mu_k=1_{A_k}$ for all $k=1\ldots, N$. 
    By means of $0<\lambda<1$, it holds that $\lambda \lambda_k+(1-\lambda)\mu_k= 1_{A_k}$ if and only $\lambda_k=\mu_k=1_{A_k}$.
    Since the last equality holds for all $k$ it follows that $Y=Z=X$. 
    Therefore, we cannot find  $Y$ and $Z$ in $\mathcal{S}\setminus \{X\}$ such that $X$ is a strict convex combination of them.
    On the other hand, consider $X \in \mathcal{S}$ such that $X\notin \sigma(X_1,\dots,X_N)$. 
    This means, $X=\sum_{k=1}^N \nu_k X_k$, such that there exist $\nu_{k_1}$ and $\nu_{k_2}$ and $B \in \mathcal{A}_+$ with
    $0< \nu_{k_1}<1$ on $B$ and $0< \nu_{k_2}<1$ on $B$.
    Define $\varepsilon:=\essinf\set{\nu_{k_1},\nu_{k_2}, 1-\nu_{k_1},1-\nu_{k_2}}$.
    Then define $\mu_k=\lambda_k=\nu_k$ if $k_1\neq k\neq k_2$ and $\lambda_{k_1}=\nu_{k_1}-\varepsilon$, 
    $\lambda_{k_2}=\nu_{k_2}+\varepsilon$, $\mu_{k_1}=\nu_{k_1}+\varepsilon$ and $\mu_{k_2}=\nu_{k_2}-\varepsilon$.
    Thus, $Y=\sum_{k=1}^N \lambda_k X_k$ and $Z=\sum_{k=1}^N \mu_k X_k$ fulfill $0.5 Y+0.5Z=X$ but both are not equal to $X$ by construction.
    Hence, $X$ can be written as a strict convex combination of elements in $\mathcal{S}\setminus \{X\}$.
    To conclude, consider $X \in \sigma(X_1,\dots,X_N) \subseteq \mathcal{S}=\conv(X_1,\dots,X_N)=\conv(Y_1,\dots,Y_N)$. 
    Since $X \in \sigma(X_1,\dots, X_N)$ it is not a strict convex combinations of elements in $\mathcal{S}\setminus \{X\}$, in particular, of elements in $\conv(Y_1,\dots, Y_N)\setminus \{X\}$. 
    Therefore, $X$ is also in $\sigma(Y_1,\dots,Y_N)$.
    Hence, $\sigma(X_1,\dots,X_N)\subseteq \sigma(Y_1,\dots,Y_N)$. 
    With the same argumentation the other inclusion follows.
\end{proof}

As an example let us consider $[0,1](\mathcal{A})$. 
For an arbitrary  $A \in \mathcal{A}$, it holds that
$1_A$ and $1_{A^c}$ are affinely independent and  $\conv(1_A,1_{A^c})=\set{\lambda 1_A+ (1-\lambda)1_{A^c}\colon 0\leq \lambda \leq  1}=[0,1](\mathcal{A})$.
Thus, the conditional simplex $[0,1](\mathcal{A})$ can be written as a convex combination of different affinely independent elements of $L^0$.
This is due to the fact that $\sigma(0,1)=\set{1_B: B \in \mathcal{A}}=\sigma(1_A,1_{A^c})$ for all $A\in \mathcal{A}$.

\begin{remark}
    In $(L^0)^d$, let  $e_i$ be the random variable which is $1$ in the $i$-th component and $0$ in any other.
    Then the family $0,e_1,\ldots,e_d$ is affinely independent and 
    $(L^0)^d=\aff(0,e_1,\ldots,e_d)$.
    Hence, the maximal number of affinely independent elements in $(L^0)^d$ is $d+1$.
\end{remark}

The characterization of $X\in \sigma(X_1,\dots,X_N)$ leads to the following definition.
\begin{definition}
    Let $\mathcal{S}=\conv(X_1,\dots,X_N)$ be a conditional simplex.
    We define the set of \emph{extremal points} $\points(\mathcal{S})=\sigma(X_1,\dots,X_N)$.
    For an index set $I$ and a collection $\mathscr{S}=(\mathcal{S}_i)_{i \in I}$ of conditional simplexes 
    we denote $\points(\mathscr{S})=\sigma(\points(\mathcal{S}_i) \colon i \in I)$.
\end{definition}

\begin{remark}\label{sigma}
    Let $\mathcal{S}^j=\conv(X^j_1,\dots, X^j_N)$, $j \in \mathbb{N}$, be conditional simplexes of the same dimension $N$ and
    $(A_j)_{j\in\mathbb{N}}$ a partition. 
    Then $\sum_{j\in \mathbb{N}} 1_{A_j} \mathcal{S}^j$ is again a conditional simplex. 
    To that end, we define $Y_k=\sum_{j\in \mathbb{N}} 1_{A_j} X^j_k$ and recognize $\sum_{j\in \mathbb{N}} 1_{A_j} \mathcal{S}^j=\conv(Y_1,\dots, Y_N)$.
    Indeed, 
    \begin{equation}\label{affine}
        \sum_{k=1}^N \lambda_k Y_k=\sum_{k=1}^N \lambda_k \sum_{j\in \mathbb{N}} 1_{A_j}X^j_k=\sum_{j\in \mathbb{N}}1_{A_j} \sum_{k=1}^N
        \lambda_k X^j_k \in \sum_{j\in \mathbb{N}} 1_{A_j} \mathcal{S}^j,
    \end{equation}
    shows $\conv(Y_1,\dots, Y_N)\subseteq\sum_{j\in \mathbb{N}} 1_{A_j} \mathcal{S}^j$.
    Considering $\sum_{k=1}^N \lambda_k^j X^j_k$ in $\mathcal{S}^j$ and defining $\lambda_k=\sum_{j\in\mathbb{N}} 1_{A_j} \lambda_k^j$ yields the other inclusion.
    To show that $Y_1,\dots, Y_N$ are affinely independent consider $\sum_{k=1}^N \lambda_k Y_k=0=\sum_{k=1}^N \lambda_k$.
    Then by \eqref{affine}, it holds $1_{A_j}\sum_{k=1}^N \lambda_k X^j_k=0$ and since $\mathcal{S}^j$ is a conditional simplex, 
    $1_{A_j}\lambda_k=0$ for all $j\in \mathbb{N}$ and $k=1,\dots, N$.
    From the fact that $(A_j)_{j\in \mathbb{N}}$ is a partition, it follows that $\lambda_k=0$ for all $k=1,\dots, N$.
\end{remark}

We will prove the Brouwer fixed point theorem in the present setting using an analogue version of Sperner's lemma. 
As in the unconditional case we have to subdivide a conditional simplex in smaller ones. 
For our argumentation we cannot use arbitrary subdivisions and need very special properties of the conditional simplexes in which we subdivide. 
This leads to the following definition.

\begin{definition}\label{baryz}
    Let $\mathcal{S}=\conv(X_1,\dots,X_N)$ be a conditional simplex and $\Sym$ the group of permutations of $\{1,\dots,N\}$.
    Then for $\pi \in \Sym$ we define 
    \begin{align*}
        \mathcal{C}_{\pi}=\conv\left(X_{\pi(1)},\frac{X_{\pi(1)}+X_{\pi(2)}}{2},\dots, \frac{X_{\pi(1)}+\cdots+X_{\pi(k)}}{k},\dots,\frac{X_{\pi(1)}+\cdots+X_{\pi(N)}}{N} \right).
    \end{align*}
    We call $(\mathcal{C}_{\pi})_{\pi \in \Sym}$ the \emph{barycentric subdivision} of $\mathcal{S}$, and denote $Y_k^{\pi}=\frac{1}{k}\sum_{i=1}^k X_{\pi(i)}$.
\end{definition}

\begin{lemma}\label{subdivision}
    The barycentric subdivision is a collection of finitely many conditional simplexes satisfying the following properties
    \begin{enumerate}[label=\textnormal{(\roman*)}]
        \item \label{cond01} 
            $\sigma(\bigcup_{\pi \in \Sym} \mathcal{C}_{\pi})=\mathcal{S}$. 
        \item\label{cond02}  
            $\mathcal{C}_{\pi}$  has dimension $N$, $\pi\in\Sym$.
        \item\label{cond03} 
            $\mathcal{C}_{\pi}\cap \mathcal{C}_{\overline{\pi}}$  is a conditional simplex of dimension $r\in \mathbb{N}$ and $r<N$ for $\pi,\overline{\pi}\in\Sym$, $\pi\neq \overline{\pi}$.
        \item\label{cond04} 
            For $s=1,\dots, N-1$, let $\mathcal{B}_s:=\conv(X_1,\dots, X_{s})$. All conditional simplexes $\mathcal{C}_{\pi}\cap \mathcal{B}_s$, $\pi\in\Sym$, of dimension\,$s$ subdivide $\mathcal{B}_s$ barycentrically.
    \end{enumerate} 
\end{lemma}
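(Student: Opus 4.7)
The plan is to establish the four properties in the order (ii), (i), (iv), (iii), each time exploiting the uniqueness of convex combination coefficients in a conditional simplex (cf.\ \eqref{wichtig}) and the $\sigma$-stability arguments of Remark~\ref{sigma}.

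For \textnormal{(ii)}, I would show that $Y_1^\pi,\ldots,Y_N^\pi$ are affinely independent by a triangular-coefficient argument. Assume $\sum_{k=1}^N \lambda_k Y_k^\pi=0$ and $\sum_{k=1}^N \lambda_k=0$. Reordering the double sum gives $\sum_{i=1}^N \mu_i X_{\pi(i)}=0$, where $\mu_i:=\sum_{k=i}^N \lambda_k/k$, and $\sum_i \mu_i=\sum_k \lambda_k=0$. Since $X_{\pi(1)},\ldots,X_{\pi(N)}$ are affinely independent, \eqref{affin2} forces $\mu_i=0$ for every $i$. Reading from $i=N$ downward then yields $\lambda_N=0$, $\lambda_{N-1}=0$, and so on.

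For \textnormal{(i)}, given $X=\sum_i \lambda_i X_i\in \mathcal{S}$, I would build for each $\pi\in\Sym$ a measurable set $A_\pi$ on which the ordering $\lambda_{\pi(1)}\geq\lambda_{\pi(2)}\geq\cdots\geq\lambda_{\pi(N)}$ holds (using a fixed lexicographic tie-breaking so the $A_\pi$ form a partition). On $A_\pi$, set $\mu_k:=k(\lambda_{\pi(k)}-\lambda_{\pi(k+1)})$ for $k<N$ and $\mu_N:=N\lambda_{\pi(N)}$. These are nonnegative; the identity $\lambda_{\pi(i)}=\sum_{k\geq i}\mu_k/k$ gives both $\sum_k \mu_k=1$ and $\sum_k \mu_k Y_k^\pi = X$ on $A_\pi$. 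Patching through $\sigma$-stability yields $X\in \sigma(\bigcup_\pi \mathcal{C}_\pi)$; the reverse inclusion is trivial since each $\mathcal{C}_\pi\subseteq \mathcal{S}$.

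The heart of the argument is the face-identification used in \textnormal{(iii)} and \textnormal{(iv)}. The crucial characterization is: $W\in \mathcal{C}_\pi$ if and only if $W=\sum_i \lambda_i X_i$ with $\lambda_i\in L^0_+$, $\sum_i \lambda_i=1$, and $\lambda_{\pi(1)}\geq \lambda_{\pi(2)}\geq\cdots\geq \lambda_{\pi(N)}$. This follows from the $\mu\leftrightarrow\lambda$ transformation in (i)/(ii) together with uniqueness \eqref{wichtig}. Given this, for \textnormal{(iv)} let $s<N$ and $\pi\in\Sym$. Then $Y_k^\pi\in \mathcal{B}_s$ iff $\{\pi(1),\dots,\pi(k)\}\subseteq\{1,\dots,s\}$, and the characterization shows $\mathcal{C}_\pi\cap \mathcal{B}_s=\conv(Y_k^\pi:\{\pi(1),\dots,\pi(k)\}\subseteq\{1,\dots,s\})$; this has dimension $s$ precisely when $\pi$ restricts to a permutation $\pi'$ of $\{1,\dots,s\}$, and in that case $\mathcal{C}_\pi\cap \mathcal{B}_s=\mathcal{C}_{\pi'}$, the corresponding piece of the barycentric subdivision of $\mathcal{B}_s$. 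Since every $\pi'\in S_s$ arises from some $\pi\in\Sym$, (iv) follows. For \textnormal{(iii)}, set $I:=\{k:\{\pi(1),\dots,\pi(k)\}=\{\bar\pi(1),\dots,\bar\pi(k)\}\}$. For $k\in I$ one has $Y_k^\pi=Y_k^{\bar\pi}$, and $|I|<N$ because $\pi\neq \bar\pi$. The inclusion $\conv(Y_k^\pi:k\in I)\subseteq \mathcal{C}_\pi\cap \mathcal{C}_{\bar\pi}$ is obvious; for the reverse, any $W$ in the intersection has coefficients $\lambda_i$ that are simultaneously decreasing along $\pi$ and along $\bar\pi$, which forces equality $\lambda_i=\lambda_j$ on blocks of indices on which the two orderings disagree, and a short combinatorial computation then expresses $W$ as a convex combination of the shared $Y_k^\pi$, $k\in I$. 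Affine independence of this subfamily is inherited from (ii).

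The main obstacle is the reverse inclusion in \textnormal{(iii)}: one must carefully track how the a.s.\ inequalities among the $\lambda_i$ force equalities on the right measurable sets, and then verify combinatorially that the resulting representation sits inside $\conv(Y_k^\pi:k\in I)$. The tools are uniqueness of coefficients \eqref{wichtig}, the ordering characterization above, and $\sigma$-stability of conditional simplexes from Remark~\ref{sigma}.
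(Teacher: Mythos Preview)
Your proposal is correct and follows essentially the same route as the paper: the triangular change of variables for (ii), the ordering/telescoping argument for (i), the index set $I$ (the paper's $J$) for (iii), and the criterion $\{\pi(1),\dots,\pi(s)\}=\{1,\dots,s\}$ for (iv) all appear there. The only organizational difference is that you make the characterization ``$W\in\mathcal{C}_\pi$ iff $\lambda_{\pi(1)}\geq\cdots\geq\lambda_{\pi(N)}$'' explicit and argue in the $X_i$-coordinates, whereas the paper carries out the equivalent inequality chain directly in the $Y_k^\pi$-coefficients; this is a change of variables, not a different idea.
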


\begin{proof}
    We show the affine independence of $Y_1^{\pi},\ldots,Y_N^{\pi}$ in $\mathcal{C}_{\pi}$.
    It holds 
    \begin{equation*}
        \lambda_{\pi(1)} X_{\pi(1)}+\lambda_{\pi(2)} \frac{X_{\pi(1)}+X_{\pi(2)}}{2}+\cdots+\lambda_{\pi(N)}\frac{\sum_{k=1}^N X_{\pi(k)}}{N}=\sum_{i=1}^N \mu_i X_i,
    \end{equation*}
    with $\mu_{i}=\sum_{k=\pi^{-1}(i)}^N \frac{\lambda_{\pi(k)}}{k}$.
    Since $\sum_{i=1}^N \mu_i=\sum_{i=1}^N \lambda_i$, the affine independence of $Y_1^{\pi},\ldots,Y_N^{\pi}$ is obtained by the affine independence of $X_1,\ldots, X_N$.
    Therefore all $\mathcal{C}_{\pi}$ are conditional simplexes.

    The intersection of two conditional simplexes $\mathcal{C}_{\pi}$ and $\mathcal{C}_{\overline{\pi}}$ can be expressed in the following manner.
    Let $J=\set{j \colon \{\pi(1),\dots,\pi(j)\}=\{\overline{\pi}(1),\dots,\overline{\pi}(j)\}}$ be the set of indexes up to which both $\pi$ and $\overline{\pi}$ 
    have the same set of images.
    Then, 
    \begin{equation}\label{nervigerschrott}
        \mathcal{C}_{\pi}\cap \mathcal{C}_{\overline{\pi}}=\conv\left( \frac{\sum_{k=1}^j X_{\pi(k)}}{j} \colon j\in J\right).
    \end{equation}

    To show $\supseteq$,	let $j \in J$. 
    It holds that $\frac{\sum_{k=1}^j X_{\pi(k)}}{j}$ is in both $\mathcal{C}_{\pi}$ and $\mathcal{C}_{\overline{\pi}}$ since
    $\set{\pi(1),\dots, \pi(j)}=\set{\overline{\pi}(1),\dots, \overline{\pi}(j)}$. 
    Since the intersection of convex sets is convex, we get this implication.

    For the reverse inclusion, let $X \in \mathcal{C}_\pi \cap \mathcal{C}_{\overline{\pi}}$.
    From $X \in \mathcal{C}_{\pi}\cap C_{\bar{\pi}}$, it follows that $X=\sum_{i=1}^N \lambda_i (\sum_{k=1}^i \frac{X_{\pi(k)}}{i})=\sum_{i=1}^N \mu_i (\sum_{k=1}^i \frac{X_{\overline{\pi}(k)}}{i})$.
    Consider $j\not\in J$. By definition of $J$, there exist $p,q\leq j$ with
    $\overline{\pi}^{-1}(\pi(p)),$ $\pi^{-1}(\overline{\pi}(q))\not\in \{1,\ldots,j\}$. By \eqref{wichtig}, the coefficients of $X_{\pi(p)}$ are
    equal:
    $\sum_{i=p}^N\frac{\lambda_i}{i}=\sum_{i=\overline{\pi}^{-1}(\pi(p))}^N\frac{\mu_i}{i}$.
    The same holds for $X_{\pi(q)}$:
    $\sum_{i=q}^N\frac{\mu_i}{i}=\sum_{i=\pi^{-1}(\overline{\pi}(q))}^N\frac{\lambda_i}{i}$.
    Put together
    \begin{align*}
        \sum_{i=j+1}^N\frac{\mu_i}{i}\leq\sum_{i=q}^N\frac{\mu_i}{i}=\sum_{i=\pi^{-1}(\overline{\pi}(q))}^N\frac{\lambda_i}{i}\leq
        \sum_{i=j+1}^N\frac{\lambda_i}{i}\leq
        \sum_{i=p}^N\frac{\lambda_i}{i}=\sum_{i=\overline{\pi}^{-1}(\pi(p))}^N\frac{\mu_i}{i}\leq
        \sum_{i=j+1}^N\frac{\mu_i}{i}
    \end{align*}
    which is only possible if $\mu_j=\lambda_j=0$ since $p,q\leq j$.

    Furthermore, if  $\mathcal{C}_{\pi}\cap \mathcal{C}_{\overline{\pi}}$ is of dimension $N$ by \eqref{nervigerschrott}
    follows that $\pi=\overline{\pi}$.
    This shows \ref{cond03}.

    As for Condition \ref{cond01}, it clearly holds $\sigma(\cup_{\pi \in \Sym} \mathcal{C}_{\pi})\subseteq \mathcal{S}$.
    On the other hand, let $X=\sum_{i=1}^N \lambda_i X_i \in \mathcal{S}$. 
    Then, we find a partition $(A_n)_{n\in\mathbb{N}}$ such that on every $A_n$ the indexes are completely ordered which is 
    $\lambda_{i^n_1}\geq\lambda_{i^n_2}\geq\cdots\geq\lambda_{i^n_N}$ on $A_n$.
    This means, that $X \in 1_{A_n} \mathcal{C}_{\pi^n}$ with $\pi^n(j)=i^n_j$.
    Indeed, we can rewrite $X$ on $A_n$ as 
    \begin{align*}
        X=(\lambda_{i_1^n}-\lambda_{i_2^n}) X_{i_1^n}+\cdots+ (N-1)(\lambda_{i_{N-1}^n}-\lambda_{i_N^n})
        \frac{\sum_{k=1}^{N-1}X_{i_k^n}}{N-1}+N\lambda_{i_N^n} \frac{\sum_{k=1}^N X_{i_k^n}}{N},
    \end{align*}
    which shows that $X \in \mathcal{C}_{\pi^n}$ on $A_n$.

    Further, for $\mathcal{B}_{s}=\conv(X_1,\dots,X_{s})$ the elements $\mathcal{C}_{\pi^\prime}\cap \mathcal{B}_{s}$ of 
    dimension $s$ are 
    exactly the ones with $\{\pi^\prime(i) \colon i=1,\dots,s\}= \{1,\dots,s\}$. 
    To this end, let $C_{\pi^\prime} \cap \mathcal{B}_s$ be of dimension $s$.
    This means there exists an element $Y$ in this intersection such that $Y=\sum_{i=1}^N \lambda_i X_i$ with $\lambda_i>0$ for all
    $i=1,\dots, s$ and $\lambda_i=0$ for $i>s$.
    As an element of $C_{\pi^\prime}$ this $Y$ has a representation of the form 
    $Y=\sum_{j=1}^N (\sum_{k=j}^N \frac{\mu_k}{k})X_{\pi^\prime(j)}$, for $\sum_{k=1}^N \mu_k=1$ and $\mu_k\in L^0_+$ for every $k=1,\dots,N$. 
    Suppose now that there exists some $j_0\leq s$ with $\pi^\prime(j_0)>s$. 
    Then due to $\lambda_{\pi^\prime(j_0)}=0$ and the uniqueness of the coefficients (cf.\eqref{wichtig})  in an $L^0$-simplex, it holds 
    $\sum_{k=j_0}^N \frac{\mu_k}{k}=0$ and within $\sum_{k=j}^N \frac{\mu_k}{k}=0$ for all $j\geq j_0$.
    This means  $Y=\sum_{j=1}^{j_0-1} (\sum_{k=j}^N \frac{\mu_k}{k})X_{\pi^\prime(j)}$ and hence
    $Y$ is the convex combination of $j_0-1$ elements with $j_0-1<s$. 
    This contradicts the property that $\lambda_i>0$ for $s$ elements.
    Therefore, $(\mathcal{C}_{\pi^\prime}\cap \mathcal{B}_{s})_{\pi^\prime}$ is exactly the barycentric subdivision of $\mathcal{B}_{s}$, which has been shown to fulfill the properties \ref{cond01}-\ref{cond03}.
\end{proof}

Subdividing a conditional simplex $\mathcal{S}=\conv (X_1,\ldots,X_N)$ barycentrically we obtain  $(\mathcal{C}_{\pi})_{\pi \in S_N}$. 
Dividing every $\mathcal{C}_{\pi}$  barycentrically results in a new collection of conditional simplexes and we call this  the two-fold barycentric subdivision of $\mathcal{S}$.
Inductively, we can subdivide every conditional simplex of the $(m-1)$th step  barycentrically and call the resulting collection of  conditional simplexes  the
$m$-fold barycentric subdivision of $\mathcal{S}$ and denote it by $\mathscr{S}^m$.
Further, we define $\points(\mathscr{S}^m)=\sigma( \{\points(\mathcal{C})\colon \mathcal{C} \in \mathscr{S}^m\})$ to be the $\sigma$-stable hull of all extremal points of the conditional 
simplexes of the $m$-fold barycentric subdivision of $\mathcal{S}$. Notice that this is the $\sigma$-stable hull of only finitely many elements, since there are only finitely many simplexes 
in the subdivision each of which is the convex hull of $N$ elements.
 \begin{remark}\label{folge}
     Consider an arbitrary $\mathcal{C}_{\pi}=\conv(Y_1^{\pi},\ldots,Y_N^{\pi})$, $\pi\in\Sym$ in the barycentric subdivision of a conditional simplex $\mathcal{S}$.
     Then it holds
     \begin{align*}
         \diam(\mathcal{C}_{\pi}) \leq \esssup_{i,j=1,\ldots,N} \norm{Y_i^{\pi}-Y_j^{\pi}} \leq \frac{N-1}{N} \diam(\mathcal{S}).
     \end{align*}
     Since this holds for any $\pi\in S_N$, it follows that the diameter of $\mathcal{S}^m$, which is an arbitrary conditional simplex of the $m$-fold
     barycentric subdivision of $\mathcal{S}$, fulfills  $\diam(\mathcal{S}^m)\leq \left(\frac{N-1}{N}\right)^m \diam(\mathcal{S})$. 
     Since $\diam(\mathcal{S})<\infty$ and $\left(\frac{N-1}{N}\right)^m\rightarrow 0$, for $m\rightarrow \infty$, it follows that
     $\diam(\mathcal{S}^m)\rightarrow 0$, for $m\rightarrow \infty$ for every sequence $(\mathcal{S}^m)_{m \in \mathbb{N}}$.
 \end{remark}

\section{Brouwer Fixed Point Theorem for Conditional Simplexes}
\begin{definition} \label{ecken}
	Let $\mathcal{S}=\conv(X_1,\dots, X_N)$ be a conditional simplex, $m$-fold barycentrically subdivided in $\mathscr{S}^m$.
	A local function $\phi \colon \points(\mathscr{S}^m)  \to \{1,\dots,N\}(\mathcal{A})$ is called a \emph{labeling function} of $\mathcal{S}$.
	For fixed $X_1,\ldots,X_N\in \points({\mathcal{S}})$ with $\mathcal{S}=\conv(X_1,\dots, X_N)$, 
	the labeling function is called \emph{proper}, if for any $Y \in \points(\mathscr{S}^m)$ it holds that
	\begin{align*}
		P\left(\left\{\phi(Y)=i\right\}\subseteq \left\{\lambda_i>0\right\}\right)=1,
	\end{align*} 
	for $i=1,\dots,N$, where $Y=\sum_{i=1}^N\lambda_i X_i$.
	A conditional simplex $\mathcal{C}=\conv(Y_1,\dots,Y_N)\subseteq \mathcal{S}$, with $Y_j\in \points(\mathscr{S}^m), j=1,\dots,N$, is said to be \emph{completely labeled} by 
	$\phi$ if this is a proper labeling function of $\mathcal{S}$ and 
	\begin{equation*}
		P\left(\bigcup_{j=1}^N \{ \phi(Y_j)=i \}\right)=1,
	\end{equation*}
	for all $i \in \{1,\dots,N\}$.
\end{definition}

\begin{lemma}\label{label}
	Let $\mathcal{S}=\conv(X_1,\ldots,X_N)$ be a conditional simplex and $f \colon \mathcal{S}\to \mathcal{S}$ be a local function.  
	Let $\phi \colon \points(\mathscr{S}^m)  \to \{0,\dots,N\}(\mathcal{A})$ be a local function such that
	\begin{enumerate}[label=\textnormal{(\roman*)}]
		\item $P\left(\set{\phi(X)=i}\subseteq \set{\lambda_i>0} \cap \set{\lambda_i\geq \mu_i}\right)=1$, for all $i=1,\dots,N$,\label{dort}
		\item $P\left(\bigcup_{i=1}^N \big(\set{\lambda_i>0} \cap \set{\lambda_i\geq \mu_i}\big) \subseteq \bigcup_{i=1}^N \set{\phi(X)=i}\right)=1$, \label{nichtnull}
	\end{enumerate}
	where $X=\sum_{i=1}^N \lambda_i X_i$ and $f(X)=\sum_{i=1}^N\mu_iX_i$.
	Then, $\phi$ is a proper labeling function. 

	Moreover, the set of functions fulfilling these properties is non-empty.
\end{lemma}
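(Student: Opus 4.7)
The plan is to show that condition (i) already encodes properness and that (ii) forces $\phi$ to avoid the label $0$, and then to exhibit a concrete construction. Write $Y=\sum_{i=1}^N \lambda_i X_i$ and $f(Y)=\sum_{i=1}^N \mu_i X_i$. Since the inclusion in (i) is exactly the properness condition (the range restriction $i\in\{1,\ldots,N\}$ being the only additional point), the only substantive thing to verify in order to conclude that $\phi$ is a proper labeling function is that its range actually lies in $\{1,\ldots,N\}(\mathcal{A})$, i.e.\ $P(\phi(Y)=0)=0$.

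The key observation is
\begin{equation*}
    \bigcup_{i=1}^N\bigl(\{\lambda_i>0\}\cap\{\lambda_i\geq\mu_i\}\bigr)=\Omega\quad\text{almost surely},
\end{equation*}
which combined with (ii) yields $P(\bigcup_{i=1}^N\{\phi(Y)=i\})=1$ and thus $\phi(Y)\ne 0$. To see the equality, fix $\omega$ outside a null set: since $\sum_i\lambda_i(\omega)=1$ the set $I(\omega):=\{i:\lambda_i(\omega)>0\}$ is non-empty, and if every $i\in I(\omega)$ satisfied $\lambda_i(\omega)<\mu_i(\omega)$ then
\begin{equation*}
    1=\sum_{i\in I(\omega)}\lambda_i(\omega)<\sum_{i\in I(\omega)}\mu_i(\omega)\leq\sum_{i=1}^N\mu_i(\omega)=1,
\end{equation*}
a contradiction.

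For existence I would construct $\phi$ pointwise. By the uniqueness of coefficients \eqref{wichtig}, every $X\in\mathcal{S}$ determines measurable $\lambda_i(X)$ and $\mu_i(X)$ with $X=\sum_i\lambda_i(X)X_i$ and $f(X)=\sum_i\mu_i(X)X_i$; set
\begin{equation*}
    \phi(X)(\omega):=\min\bigl\{i\in\{1,\ldots,N\}:\lambda_i(X)(\omega)>0\text{ and }\lambda_i(X)(\omega)\geq\mu_i(X)(\omega)\bigr\},
\end{equation*}
which is well defined by the previous step. Measurability is immediate from
\begin{equation*}
    \{\phi(X)=i\}=\{\lambda_i(X)>0\}\cap\{\lambda_i(X)\geq\mu_i(X)\}\cap\bigcap_{j<i}\bigl(\{\lambda_j(X)=0\}\cup\{\lambda_j(X)<\mu_j(X)\}\bigr).
\end{equation*}
For locality on $\points(\mathscr{S}^m)$, given a partition $(A_k)$ and extremal vertices $Y^k$, the uniqueness of coefficients gives $\lambda_i(\sum_k 1_{A_k}Y^k)=\sum_k 1_{A_k}\lambda_i(Y^k)$, locality of $f$ gives the analogous identity for $\mu_i$, so on each $A_k$ the defining minimum for $\phi(\sum_k 1_{A_k}Y^k)$ collapses to that for $\phi(Y^k)$. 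Properties (i) and (ii) are then built into the definition.

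I expect the main obstacle to be the interplay between the pointwise definition and $\sigma$-stability when checking locality — one must make sure the coefficient maps $X\mapsto\lambda_i(X),\mu_i(X)$ transform correctly under $\sigma$-combinations, which is exactly where \eqref{wichtig} and locality of $f$ are used. The probabilistic argument excluding the label $0$ is short once one isolates $\sum\lambda_i=\sum\mu_i=1$ as the only ingredient.
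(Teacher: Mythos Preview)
Your proposal is correct and follows essentially the same approach as the paper: the same contradiction on $\sum_i\lambda_i=\sum_i\mu_i=1$ to force the label away from $0$, the same explicit construction (your ``minimum index'' rule is exactly the paper's $\{\phi(X)=i\}=B_i\setminus\bigcup_{k<i}B_k$ with $B_i=\{\lambda_i>0\}\cap\{\lambda_i\ge\mu_i\}$), and the same locality verification via the uniqueness of coefficients together with locality of $f$. The only cosmetic difference is that you argue pointwise in $\omega$ where the paper phrases the contradiction on a set $A\in\mathcal{A}_+$.
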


\begin{proof}
	First we show that $\phi$ is a labeling function. Since $\phi$ is local we just have to prove that $\phi$ actually maps to $\{1,\dots,N\}$. 
	Due to \ref{nichtnull}, we have to show that $P(\bigcup_{i=1}^N\left\{\lambda_i\geq \mu_i \colon \lambda_i >0\right\})=1$. Assume to
	the contrary, $\mu_i > \lambda_i$ on $A\in\mathcal{A}_{+}$, for all $\lambda_i$ with $\lambda_i>0$ on $A$. 
	Then it holds that  $1=\sum_{i=1}^N
	\lambda_i 1_{\{\lambda_i>0\}}<\sum_{i=1}^N \mu_i 1_{\{\mu_i>0\}}=1$ on $A$ which yields a
	contradiction. Thus, $\phi$ is a labeling function.
	Moreover, due to \ref{dort} it holds that $P\left(\{\phi(X)=i\}\subseteq \{\lambda_i>0\}\right)=1$ which shows that $\phi$ is proper.

	To prove the existence, for $X\in \points(\mathscr{S}^m)$ with $X=\sum_{i=1}^N \lambda_i X_i$, $f(X)=\sum_{i=1}^N \mu_i$ 
	let  $B_i:=\{\lambda_i>0\} \cap \{\lambda_i\geq \mu_i\}$, $i=1,\dots,N$.
	Then we define the function $\phi$ at $X$ as $\{\phi(X)=i\}=B_i \setminus (\bigcup_{k=1}^{i-1} B_k)$, $i=1,\dots,N$.
	It has been shown that $\phi$ maps to $\{1,\dots,N\}(\mathcal{A})$ and is proper.
	It remains to show that $\phi$ is local. 
	To this end, consider $X=\sum_{j \in \mathbb{N}} 1_{A_j} X^j$ where $X^j=\sum_{i=1}^N \lambda^j_i X_i$ and $f(X^j)=\sum_{i=1}^N \mu^j_i X_i$.
	Due to uniqueness of the coefficients in a conditional simplex it holds that $\lambda_i=\sum_{j\in \mathbb{N}} 1_{A_j} \lambda^j_i$ 
	and due to locality
	of $f$ it follows that $\mu_i=\sum_{j\in \mathbb{N}} 1_{A_j} \mu^j_i$.
	Therefore it holds that 
	$B_i=\bigcup_{j \in \mathbb{N}} \left(\{\lambda^j_i>0\}\cap
	\{\lambda^j_i \geq \mu^j_i\}\cap A_j\right)=\bigcup_{j \in \mathbb{N}} (B^j_i\cap A_j)$.
	Hence, $\phi(X)=i$ on $B_i\setminus (\bigcup_{k=1}^{i-1} B_k)= 
	[\bigcup_{j \in \mathbb{N}} (B_i^j \cap A_j)]\setminus [\bigcup_{k=1}^{i-1}(\bigcup_{j\in \mathbb{N}} B_k^j \cap A_j)]=
	\bigcup_{j \in \mathbb{N}} [(B^j_i\setminus \bigcup_{k=1}^{i-1} B^j_k)\cap A_j]$.
	On the other hand, we see that $\sum_{j \in \mathbb{N}} 1_{A_j} \phi(X^j)$ is $i$ on any  $A_j \cap \{\phi(X^j)=i\}$, 
	hence it is $i$ on $\bigcup_{j \in \mathbb{N}} (B^j_i\setminus \bigcup_{k=1}^{i-1} B^j_k) \cap A_j$.
	Thus, $\sum_{j\in \mathbb{N}} 1_{A_j}\phi(X^j)=\phi(\sum_{j\in \mathbb{N}} 1_{A_j} X^j)$ which shows that $\phi$ is local. 
\end{proof}

The reason to demand locality of a labeling function is exactly because we want to label by the rule explained in  Lemma \ref{label} and hence keep local information with it. 
For example consider a conditional simplex $\mathcal{S}=\conv(X_1,X_2,X_3,X_4)$  and $\Omega=\set{\omega_1,\omega_2}$.
Let $Y \in \points(\mathscr{S})$ be given by $Y=\frac{1}{3}\sum_{i=1}^3 X_i$.
Now consider a function $f$ on $\mathcal{S}$ such that
\begin{align*}
	f(Y)(\omega_1)=\frac{1}{4} X_1(\omega_1)+ \frac{3}{4} X_3(\omega_1); \quad
	f(Y)(\omega_2)=\frac{2}{5} X_1(\omega_2)+\frac{2}{5} X_2(\omega_2)+ \frac{1}{5} X_4(\omega_2).
\end{align*}
If we  label $Y$ by the rule explained in Lemma \ref{label},
$\phi$ takes the  values $\phi(Y)(\omega_1) \in \{1,2\}$ and $\phi(Y)(\omega_2)= 3$. 
Therefore, we can really distinguish on which sets $\lambda_i\geq \mu_i$. 
Yet, using a deterministic labeling of $Y$, we would loose this information. 

\begin{theorem}\label{thm:brower}
	Let $\mathcal{S}=\conv(X_1,\ldots,X_N)$ be a conditional simplex in $(L^0)^d$. Let $f \colon \mathcal{S} \to \mathcal{S}$ be a local,
	sequentially continuous function. Then there exists $Y\in \mathcal{S}$ such that $f(Y)=Y$.
\end{theorem}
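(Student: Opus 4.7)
The plan is to mirror the classical Sperner-based proof of Brouwer's theorem, adapted to the $L^0$-module setting, relying on the conditional Sperner lemma promised in the introduction of Section~2. For each $m\in\mathbb{N}$, Lemma~\ref{label} produces a local, proper labeling $\phi_m\colon\points(\mathscr{S}^m)\to\{1,\dots,N\}(\mathcal{A})$ derived from $f$ by the rule $\{\phi_m(X)=i\}=B_i\setminus\bigcup_{k<i}B_k$ with $B_i=\{\lambda_i>0\}\cap\{\lambda_i\geq\mu_i\}$. Applying the conditional Sperner lemma, I obtain for every $m$ a completely labeled conditional simplex $\mathcal{C}^m\subseteq\mathcal{S}$ whose $N$ vertices lie in $\points(\mathscr{S}^m)$, constructed, if necessary, as a $\sigma$-combination of simplexes of the $m$-fold barycentric subdivision along a partition of $\Omega$ as in Remark~\ref{sigma}.

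Next, I exploit the complete labeling to pick, for each label $i$, a canonical vertex. Writing $\mathcal{C}^m=\conv(Z_1^m,\dots,Z_N^m)$, the property $P\bigl(\bigcup_{j=1}^N\{\phi_m(Z_j^m)=i\}\bigr)=1$ yields a measurable selection of an index and hence, by $\sigma$-stability, an element $Y_i^m=\sum_{j=1}^N 1_{A_{i,j}^m}Z_j^m$ with $\phi_m(Y_i^m)=i$ almost surely, thanks to locality of $\phi_m$. By Remark~\ref{folge} we have $\norm{Y_i^m-Y_j^m}\leq\diam(\mathcal{C}^m)\leq\bigl(\tfrac{N-1}{N}\bigr)^m\diam(\mathcal{S})\to 0$.

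Decomposing in the unique coordinates provided by \eqref{wichtig}, write $Y_i^m=\sum_{k=1}^N\lambda_k^{m,i}X_k$ and $f(Y_i^m)=\sum_{k=1}^N\mu_k^{m,i}X_k$. The definition of $\phi_m$ in Lemma~\ref{label} then gives $\lambda_i^{m,i}\geq\mu_i^{m,i}$ almost surely for every $i$. Since $\mathcal{S}$ is bounded, I invoke the $L^0$-analogue of Bolzano--Weierstrass (extraction of a $P$-a.s. convergent subsequence of a bounded $(L^0)^d$-valued sequence, coordinatewise via a diagonal argument) to pass to a subsequence along which $Y_1^{m_k}\to Y$ for some $Y\in\mathcal{S}$; since $\norm{Y_i^{m_k}-Y_1^{m_k}}\to 0$, the same limit $Y$ is reached by every $Y_i^{m_k}$. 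Affine independence of $X_1,\dots,X_N$ together with uniqueness of representation in \eqref{wichtig} ensures that the barycentric coordinate map is continuous with respect to $P$-a.s.\ convergence, so $\lambda_k^{m_k,i}\to\lambda_k$ where $Y=\sum_k\lambda_kX_k$. Sequential continuity of $f$ yields $f(Y_i^{m_k})\to f(Y)$, and likewise $\mu_k^{m_k,i}\to\mu_k$ where $f(Y)=\sum_k\mu_kX_k$. Passing to the limit in $\lambda_i^{m_k,i}\geq\mu_i^{m_k,i}$ gives $\lambda_i\geq\mu_i$ for every $i=1,\dots,N$; combined with $\sum_i\lambda_i=1=\sum_i\mu_i$ this forces $\lambda_i=\mu_i$ almost surely for all $i$, hence $f(Y)=Y$.

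The main obstacle I anticipate lies not in the combinatorics (which is packaged in the conditional Sperner lemma) but in the analytic limit step: establishing that bounded sequences in $(L^0)^d$ admit $P$-a.s.\ convergent subsequences, and that the coefficient map associated with an affinely independent family is continuous for this mode of convergence, so that inequalities between coefficients are preserved in the limit. Once this $L^0$-replacement of sequential compactness is in place the rest of the argument becomes a direct transcription of the classical pattern.
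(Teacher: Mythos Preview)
Your approach is essentially the paper's own: Sperner-type labeling via Lemma~\ref{label}, a completely labeled conditional simplex at every subdivision level, extraction of vertices $Y_i^m$ carrying label $i$, a compactness step, and passage to the limit in the inequalities $\lambda_i^{m,i}\geq\mu_i^{m,i}$. The paper proves the Sperner statement inline (by induction on $N$, reducing on each piece of a suitable partition to a deterministic parity count), whereas you invoke it as a black box; that is a matter of organization, not substance.

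There is, however, a genuine gap in your compactness step. Your phrase ``extraction of a $P$-a.s.\ convergent subsequence \dots\ coordinatewise via a diagonal argument'' suggests a classical subsequence $(m_k)_{k\in\mathbb{N}}\subseteq\mathbb{N}$. That does not work in $L^0$: a bounded sequence in $L^0$ need not admit any $P$-a.s.\ convergent classical subsequence (think of Rademacher functions on $[0,1]$). What the paper uses instead is \cite[Corollary~3.9]{cheridito2012}: for a sequence in an $L^0$-bounded, sequentially closed set one obtains a strictly increasing sequence $(M_m)_{m\in\mathbb{N}}$ in $\mathbb{N}(\mathcal{A})$---random indices---such that $V_1^{M_m}:=\sum_{n\in\mathbb{N}}1_{\{M_m=n\}}V_1^n$ converges $P$-a.s. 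This is exactly why locality of $f$ and of $\phi_m$ is needed: to ensure that $f(V^{M_m}_1)=\sum_n 1_{\{M_m=n\}}f(V^n_1)$ and that the labeling inequalities survive the $\sigma$-combination. Once you replace your classical subsequence by this random-index extraction, the rest of your argument goes through verbatim; your concluding paragraph correctly identifies this as the crux, but the mechanism you sketch there is not the right one.
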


\begin{proof}
	We consider the barycentric subdivision $(\mathcal{C}_{\pi})_{\pi\in\Sym}$ of $\mathcal{S}$ and a proper labeling function $\phi$ on 
	$\points(\mathscr{S})$.
	First, we show that we can find a completely
	labeled conditional simplex in $\mathcal{S}$.
	By induction on the dimension of $\mathcal{S}=\conv(X_1,\ldots, X_N)$, we
	show that there exists a partition $(A_k)_{k=1,\ldots,K}$ such that on any $A_k$ there is an odd number
	of completely labeled $\mathcal{C}_{\pi}$. The case $N=1$ is clear, since a
	point can  be labeled with the constant index $1$, only.

	Suppose the case $N-1$ is proven.
	Since the number of $Y_i^{\pi}$ of the barycentric subdivision is finite and $\phi$ can only take finitely many values, it holds for all
	$V\in (Y_i^{\pi})_{i=1,\ldots,N,\pi\in\Sym}$ there exists a partition $(A_k^V)_{k=1,\ldots,K}$, $K<\infty$, where $\phi(V)$ is constant on any
	$A_k^V$. Therefore, we find a partition $(A_k)_{k=1,\ldots,K}$, such that
	$\phi(V)$ on $A_k$ is constant for all $V$ and $A_k$.
	Fix $A_k$ now.

	In the following, we denote by  $\mathcal{C}_{\pi^b}$  these conditional simplexes for which $\mathcal{C}_{\pi^b}\cap \mathcal{B}_{N-1}$ are $N-1$-dimensional 
	(cf. Lemma \ref{subdivision} \ref{cond04}), therefore $\pi^b(N)=N$. 
	Further we denote by $\mathcal{C}_{\pi^c}$ these conditional simplexes  which are not of the type $\mathcal{C}_{\pi^b}$, that is $\pi^c(N)\neq N$.
	If we use $\mathcal{C}_\pi$ we mean a conditional simplex of arbitrary type. 
	We define
	\begin{itemize}
		\item   $\mathscr{C}\subseteq (\mathcal{C}_{\pi})_{\pi\in\Sym}$ to be the set of $\mathcal{C}_{\pi}$  which are completely labeled on $A_k$. 
		\item $\mathscr{A}\subseteq (\mathcal{C}_{\pi})_{\pi\in\Sym}$ to be the set of the $P$-almost completely labeled $\mathcal{C}_{\pi}$, that is
			$$\left\{ \phi (Y_k^{\pi}), k=1,\dots, N\right\}=\left\{1,\ldots,N-1\right\}\quad \text{on }A_k.$$
		\item  $\mathscr{E}_{\pi}$  to be the set of the  intersections $(\mathcal{C}_{\pi} \cap \mathcal{C}_{\pi_l})_{\pi_l\in\Sym}$ which are $N-1$-dimensional and completely labeled on $A_k$.\footnote{ That is bearing exactly the label $1,\dots,N-1$ on $A_k$.}
		\item $\mathscr{B}_{\pi}$ to be the set of the intersections $\mathcal{C}_{\pi}\cap \mathcal{B}_{N-1}$ which are completely labeled on $A_k$.
	\end{itemize}
	It holds that $\mathscr{E}_\pi \cap \mathscr{B}_\pi=\emptyset$ and hence $\abs{\mathscr{E}_\pi \cup \mathscr{B}_\pi}=\abs{\mathscr{E}_\pi}+ \abs{\mathscr{B}_\pi}$.
	Since $\mathcal{C}_{\pi^c} \cap \mathcal{B}_{N-1}$ is at most $N-2$-dimensional, it holds that $\mathscr{B}_{\pi^c}=\emptyset$ and hence $ \abs{\mathscr{B}_{\pi^c}}=0$.
	Moreover, we know that $\mathcal{C}_{\pi} \cap \mathcal{C}_{\pi_l}$ is $N-1$-dimensional on $A_k$ if and only if this holds on whole $\Omega$ 
	(cf. Lemma \ref{subdivision} \ref{cond03}) and 
	$\mathcal{C}_{\pi^b} \cap \mathcal{B}_{N-1} \neq \emptyset$ on $A_k$ if and only if 
	this also holds on whole $\Omega$ (cf. Lemma \ref{subdivision} \ref{cond04}). 
	So it does not play any role if we look at these sets which are intersections on $A_k$ or on $\Omega$ since they 
	are exactly the same sets.

	If $\mathcal{C}_{\pi^c}\in \mathscr{C}$ then $\abs{\mathscr{E}_{\pi^c}}=1$ and if $\mathcal{C}_{\pi^b} \in \mathscr{C}$ then 
	$\abs{\mathscr{E}_{\pi^b}\cup \mathscr{B}_{\pi^b}}=1$.
	If $\mathcal{C}_{\pi^c} \in \mathscr{A}$ then $\abs{\mathscr{E}_{\pi^c}}=2$ and if $\mathcal{C}_{\pi^b} \in \mathscr{A}$ 
	then $\abs{\mathscr{E}_{\pi^b}\cup \mathscr{B}_{\pi^b}}=2$.
	Therefore it holds $\sum_{\pi\in\Sym} \abs{\mathscr{E}_{\pi} \cup \mathscr{B}_{\pi}}=\abs{\mathscr{C}}+2\abs{\mathscr{A}}$.

	If we pick an $E_{\pi} \in \mathscr{E}_{\pi}$ we know there always exists exactly one other $\pi_l$ such that $E_{\pi} \in \mathscr{E}_{\pi_l}$
	(Lemma \ref{subdivision}\ref{cond03}).
	Therefore $\sum_{\pi\in \Sym} \abs{\mathscr{E}_{\pi}}$ is even. 
	Moreover $(\mathcal{C}_{\pi^b}\cap \mathcal{B}_{N-1})_{\pi^b}$ subdivides $\mathcal{B}_{N-1}$ barycentrically\footnote{
		The boundary of $\mathcal{S}$ is a $\sigma$-stable set so if it is partitioned by the
		labeling function into $A_k$ we know that $\mathcal{B}_{N-1}(\mathcal{S})=\sum_{k=1}^K 1_{A_k}\mathcal{B}_{N-1}(1_{A_k}\mathcal{S})$ and by
		Lemma \ref{subdivision} \ref{cond04}  we can apply the induction hypothesis
	also on $A_k$.}
	and hence we can apply the hypothesis (on $\points(\mathcal{C}_{\pi^b} \cap \mathcal{B}_{N-1})$). 
	This means that the number of completely labeled conditional simplexes is odd on a partition of $\Omega$ but since $\phi$ is constant on $A_k$ it also has to be odd there.
	This  means that $\sum_{\pi^b} \abs{\mathscr{B}_{\pi^b}}$ has to be odd. 
	Hence, we also have that  $\sum_{\pi} \abs{\mathscr{E}_{\pi} \cup \mathscr{B}_{\pi}}$ is the sum of an even and an odd number and thus odd. So we conclude 
	$\abs{\mathscr{C}}+2\abs{\mathscr{A}}$ is odd and 
	hence also $\abs{\mathscr{C}}$.
	Thus, we find for any $A_k$ a completely labeled $\mathcal{C}_{\pi_k}$.

	We define $\mathcal{S}^1 =\sum_{k=1}^K 1_{A_k}\mathcal{C}_{\pi_k}$ which by Remark \ref{sigma} is indeed a conditional simplex.
	Due to $\sigma$-stability of $\mathcal{S}$ it holds  $\mathcal{S}^1 \subseteq \mathcal{S}$.
	By Remark \ref{folge}  $\mathcal{S}^1$ has
	a diameter which is less then $\frac{N-1}{N} \diam(\mathcal{S})$  and since $\phi$ is local $\mathcal{S}^1$ is completely labeled on whole $\Omega$.

	The same argumentation holds for every $m$-fold barycentric subdivision $\mathscr{S}^m$ of $\mathcal{S}$, $m\in \mathbb{N}$, that is, there exists a completely labeled conditional simplex in every $m$-fold 
	barycentrically 
	subdivided conditional simplex which is properly labeled. 
	Henceforth, subdividing $\mathcal{S}$ $m$-fold barycentrically and label it by $\phi^m\colon \points(\mathscr{S}^m) \to \set{1,\dots,N}(\mathcal{A})$, which is a labeling 
	function as in Lemma \ref{label}, we always obtain a completely labeled conditional simplex $\mathcal{S}^{m+1} \subseteq \mathcal{S}$, for $m\in \mathbb{N}$.
	Moreover, since $\mathcal{S}^1$ is completely labeled, it holds $\mathcal{S}^1 =\sum_{k=1}^K 1_{A_k}\mathcal{C}_{\pi_k}$ as above where $\mathcal{C}_{\pi_k}$ is completely labeled on $A_k$. 
 	This means $\mathcal{C}_{\pi_k}=\conv(Y^k_1,\dots,Y^k_N)$ with $\phi(Y^k_j)=j$ on $A_k$ for every $j=1,\dots,N$.
 	Defining $V^1_j=\sum_{k=1}^K 1_{A_k} Y^k_j$ for every $j=1,\dots,N$ yields $P(\{\phi(V^1_j)=j\})=1$ for every $j=1,\dots,N$ and $\mathcal{S}^1=\conv(V^1_1,\dots,V_N^1)$. 
 	The same holds for any $m\in \mathbb{N}$ and so that we can write $\mathcal{S}^m=\conv(V^m_1,\dots, V^m_N)$ with $P(\phi^{m-1}(\{V^m_j)=j\})=1$ for every $j=1,\dots,N$.
    
    Now, $(V_1^m)_{m \in \mathbb{N}}$ is a sequence in the sequentially closed, $L^0$-bounded set $\mathcal{S}$, so that by \cite[Corollary 3.9]{cheridito2012},
    there exists $Y \in \mathcal{S}$ and a sequence $(M_m)_{m \in \mathbb{N}}$ in $\mathbb{N}(\mathcal{A})$ such that $M_{m+1}>M_{m}$ for all $m\in \mathbb{N}$ and 
    $\lim_{m \to \infty} V^{M_m}_1 =Y$ $P$-almost surely. For $M_m \in \mathbb{N}(\mathcal{A})$, 
    $V^{M_m}_1$ is defined as $\sum_{n \in \mathbb{N}} 1_{ \{M_m=n\}} V^n_1$. 
    This means an element with index $M_m$, for some $m\in \mathbb{N}$, equals $V^n_1$ on $A_n$, $n \in \mathbb{N}$, where the sets $A_n$ are determined by $M_m$ 
    via $A_n=\{M_m=n\}$, $n\in \mathbb{N}$.  
    Furthermore, as $m$ goes to $\infty$, $\diam(S^m)$ is converging to zero $P$-almost surely, and therefore it also  follows  that $\lim_{m\to \infty} V^{M_m}_k=Y$ $P$-almost surely for every $k=1,\ldots,N$.
    Indeed, it holds $\abs{V^m_k-Y}\leq diam(\mathcal{S}^m)+\abs{V^m_1-Y}$ for every $k=1,\dots,N$ and $m\in \mathbb{N}$ so we can use the sequence $(M_m)_{m \in \mathbb{N}}$ for every $k=1,\dots,N$.
    
    Let $Y=\sum_{l=1}^N \alpha_l X_l$ and $f(Y)=\sum_{l=1}^N \beta_l X_l$ as well as $V^{m}_k = \sum_{l=1}^N \lambda_l^{m,k} X_l$ and $f(V^{m}_k)=\sum_{l=1}^N \mu_l^{m,k}X_l$ for $m \in \mathbb{N}$.
   As $f$ is local it holds that $f(V^{M_m}_1)=\sum_{n\in \mathbb{N}} 1_{\{M_m=n\}}f(V^n_1)$.
     By sequential continuity  of $f$, it follows that $\lim_{m\to \infty} f(V^{M_n}_k)=f(Y)$ $P$-almost surely for every $k=1,\ldots,N$.
    In particular, $\lim_{m\to \infty}\lambda^{M_m,l}_l=\alpha_l$ and $\lim_{m \to \infty}\mu^{M_m,l}_l = \beta_l$ $P$-almost surely for every $l=1,\ldots,N$.
    However, by construction, $\phi^{m-1}(V_l^m)=l$ for every $l=1,\ldots,N$, and from the choice of $\phi^{m-1}$, it follows that $\lambda^{m,l}_l\geq \mu^{m,l}_l$ $P$-almost surely for every $l=1,\ldots,N$ and $m\in \mathbb{N}$.
    Hence, $\alpha_l=\lim_{m\to \infty} \lambda^{M_m,l}_l\geq \lim_{m\to \infty }\mu^{M_m,l}_l=\beta_l$ $P$-almost surely for every $l=1,\ldots,N$.
    This is possible only if $\alpha_l=\beta_l$ $P$-almost surely for every $l=1,\ldots,N$, showing that $f(Y)=Y$.
\end{proof}
\section{Applications}
\subsection{Fixed point theorem for sequentially closed and bounded sets in $\left(L^0\right)^d$}
\begin{proposition}
	Let $\mathcal{K}$ be an $L^0$-convex, sequentially closed and bounded subset of $(L^0)^d$ and 
	$f \colon \mathcal{K}\to \mathcal{K}$ a local,
	sequentially continuous function. Then $f$ has a fixed point. 
\end{proposition}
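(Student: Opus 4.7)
The plan is to reduce to Theorem~\ref{thm:brower} by the classical enclose-and-retract argument: enclose $\mathcal{K}$ in a conditional simplex $\mathcal{S}$, build a local and sequentially continuous nearest-point projection $\pi \colon \mathcal{S}\to \mathcal{K}$, and apply the previous theorem to the composition $f\circ \pi \colon \mathcal{S}\to \mathcal{S}$. Any fixed point of $f\circ \pi$ must lie in $\mathcal{K}$ and is then automatically a fixed point of $f$.

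For the enclosing simplex, boundedness of $\mathcal{K}$ yields $R:=\esssup_{X\in\mathcal{K}}\norm{X}\in L^0_{+}$. Choosing $C\in L^0$ with $C\geq 3R+1$, I set $X_0=-C\sum_{i=1}^{d}e_i$ and $X_i=dC\,e_i$ for $i=1,\ldots,d$. Solving the barycentric system coordinate-wise shows that these $d+1$ vectors are affinely independent and that the $L^0$-ball $\Set{X\in(L^0)^d\colon \norm{X}\leq R}$ is contained in $\mathcal{S}:=\conv(X_0,\ldots,X_d)$; in particular $\mathcal{K}\subseteq \mathcal{S}$.

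For the projection, I define $\pi(X)$ as the unique minimiser of $Y\mapsto \norm{X-Y}$ over $Y\in\mathcal{K}$. Existence and uniqueness follow by transcribing the Hilbert-space proof to $(L^0)^d$ with the $L^0$-scalar product: applying the parallelogram identity to an $\essinf$-minimising sequence produces an $L^0$-Cauchy sequence which, by sequential closedness of $\mathcal{K}$ together with \cite[Corollary~3.9]{cheridito2012}, admits a $P$-a.s.\ limit in $\mathcal{K}$; strict $L^0$-convexity of $\norm{\cdot}^2$ gives uniqueness. Locality of $\pi$ is then immediate from uniqueness: for $X=\sum_j 1_{A_j}X^j$, the element $\sum_j 1_{A_j}\pi(X^j)$ lies in $\mathcal{K}$ (by $\sigma$-stability, which is a consequence of $L^0$-convexity and sequential closedness) and is easily seen to minimise $\norm{X-\cdot}$ by locality of the norm, so it equals $\pi(X)$. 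Sequential continuity follows from the variational characterisation $\langle X-\pi(X),Y-\pi(X)\rangle\leq 0$ for all $Y\in\mathcal{K}$, which yields the 1-Lipschitz bound $\norm{\pi(X)-\pi(X')}\leq \norm{X-X'}$.

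Putting the pieces together, $g:=f\circ \pi \colon \mathcal{S}\to \mathcal{K}\subseteq \mathcal{S}$ is local and sequentially continuous, so Theorem~\ref{thm:brower} supplies $Y\in \mathcal{S}$ with $g(Y)=Y$. Since $g(Y)=f(\pi(Y))\in \mathcal{K}$, we obtain $Y\in \mathcal{K}$, hence $\pi(Y)=Y$ and finally $f(Y)=Y$. The main obstacle is the projection step: one has to carefully establish existence, locality and sequential continuity of $\pi$ in the $L^0$-module framework, where Cauchy-type completeness has to be formulated via $P$-a.s.\ convergence (or via measurable subsequences, as in the concluding part of the proof of Theorem~\ref{thm:brower}). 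Once that technical point is set, the enclosure and the reduction to the simplex case are essentially formal.
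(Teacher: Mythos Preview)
Your proposal is correct and follows the same enclose--project--compose strategy as the paper: enclose $\mathcal{K}$ in a conditional simplex, project onto $\mathcal{K}$ via the nearest-point map, derive the variational inequality and the $1$-Lipschitz bound, and apply Theorem~\ref{thm:brower} to the composition $f\circ\pi$. The only differences are in presentation: the paper cites \cite[Corollary~5.5]{cheridito2012} directly for existence and uniqueness of the projection rather than sketching the parallelogram-law argument, and it does not spell out the locality of $\pi$, which you (rightly) include since Theorem~\ref{thm:brower} requires it.
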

\begin{proof}
	Since $\mathcal{K}$ is bounded, there exists a conditional simplex $\mathcal{S}$ such that $\mathcal{K}\subseteq \mathcal{S}$. 
	Now define the function $h \colon \mathcal{S}\to \mathcal{K}$ by 
	\begin{align*}
		h(X)=\begin{cases} X, &\text{if } X \in \mathcal{K},\\
			\argmin\set{\norm{X-Y} \colon Y \in \mathcal{K}}, &\text{else}.
		\end{cases}
	\end{align*}
	This means, that $h$ is the identity on $\mathcal{K}$ and the projection on $\mathcal{K}$ for the elements in $\mathcal{S}\setminus \mathcal{K}$.
	Due to \citep[Corollary 5.5]{cheridito2012}  this minmium exists and is unique. Therefore $h$ is well-defined.

	We can characterize $h$ by 
	\begin{align}\label{char01}
		Y=h(X)\Leftrightarrow \langle X-Y,Z-Y \rangle \leq 0,\ \text{ for all } Z \in \mathcal{K}.
	\end{align}

	Indeed, let $\langle X-Y,Z-Y \rangle \leq 0$ for all $Z \in \mathcal{K}$. Then 
	\begin{multline*}
		\norm{X-Z}^2=\norm{(X-Y)+(Y-Z)}\\
		=\norm{X-Y}^2+2\langle X-Y,Y-Z\rangle +\norm{Y-Z}^2\geq \norm{X-Y}^2,
	\end{multline*}
	which shows  the minimizing property of $h$.
On the other hand, let $Y=h(X)$. Since $\mathcal{K}$ is convex, $\lambda Z+(1-\lambda) Y\in \mathcal{K}$ for any $\lambda \in (0,1](\mathcal{A})$ and $Z\in\mathcal{K}$. By standard calculation,
\begin{align*}
	\norm{X-(\lambda Z+(1-\lambda)Y)}^2\geq\norm{X-Y}^2
\end{align*}
yields
$0\geq -2\lambda \langle X,-Y\rangle +(2\lambda -\lambda^2)\langle Y,Y \rangle+ 2 \lambda \langle X, Z\rangle  -\lambda^2\norm{Z}^2-2\lambda(1-\lambda)\langle Z,Y \rangle$.
Dividing by $\lambda>0$ and  letting $\lambda \downarrow 0$ afterwards yields
\begin{equation*}
	0\geq -2 \langle X,-Y\rangle +2\langle Y,Y \rangle+ 2 \langle X, Z\rangle -2\langle Z,Y \rangle=2\langle X-Y,Z-Y\rangle,
\end{equation*}
which is the desired claim.

Furthermore, for any $X,Y \in \mathcal{S}$ holds 
\begin{equation*}
	\norm{h(X)-h(Y)}\leq \norm{X-Y}.
\end{equation*}
Indeed, 
\begin{equation*}
	X-Y=(h(X)-h(Y))+X-h(X)+h(Y)-Y=:(h(X)-h(Y))+c
\end{equation*}
which means
\begin{equation}\label{char02}
	\norm{X-Y}^2=\norm{h(X)-h(Y)}^2+\norm{c}^2+2\langle c,h(X)-h(Y)\rangle.
\end{equation}
Since
\begin{equation*}
	\langle c, h(X)-h(Y)\rangle=-\langle X-h(X),h(Y)-h(X)\rangle-\langle Y-h(Y),h(X)-h(Y) \rangle, 
\end{equation*}
by \eqref{char01}, it follows  that $\langle c, h(X)-h(Y)\rangle\geq 0$ and \eqref{char02} yields $\norm{X-Y}^2\geq \norm{h(X)-h(Y)}^2$. 
This shows that $h$ is sequentially continuous.

The function $f\circ h$ is a sequentially continuous function mapping from $\mathcal{S}$ to $\mathcal{K}\subseteq \mathcal{S}$.
Hence, there exists a fixed point $f\circ h(Z)=Z$. 
Since $f\circ h$ maps to $\mathcal{K}$, this $Z$ has to be in $\mathcal{K}$.
But then we know $h(Z)=Z$ and therefore $f(Z)=Z$ which ends the proof.
\end{proof}

\begin{remark}
	In \citet{cond01} the concept of conditional compactness is introduced and it is shown that there is an equivalence between conditional compactness 
	and conditional closed- and boundedness in $(L^0)^d$.
	In that context we can formulate the conditional Brouwer fixed point theorem as follows.
	A sequentially continuous function $f \colon \mathcal{K}\to \mathcal{K}$ such that $\mathcal{K}$ is a conditionally compact and $L^0$-convex subset of $(L^0)^d$ has a fixed point.
\end{remark}

\subsection{Applications in Conditional Analysis on $\left(L^0\right)^d$}
Working in $\mathbb{R}^d$ the Brouwer fixed point theorem can be used to prove several
topological properties and is even equivalent to some of them. 
In the theory of $(L^0)^d$ we will shown that the conditional Brouwer fixed point theorem has several implications as well.

Define the \emph{unit ball} in $(L^0)^d$ by $\mathcal{B}(d)=\set{X \in (L^0)^d \colon \norm{X}\leq 1}$. 
Then by the former theorem any local, sequentially continuous function $f \colon \mathcal{B}(d)\to \mathcal{B}(d)$ has a fixed point.
The \emph{unit sphere} $\mathcal{S}(d-1)$ is defined as $\mathcal{S}(d-1)=\set{X \in (L^0)^d \colon \norm{X}=1}$.

\begin{definition}
	Let $\mathcal{X}$ and $\mathcal{Y}$ be subsets of $(L^0)^d$.
	An \emph{$L^0$-homotopy} of two local, sequentially continuous functions $f,g \colon \mathcal{X}\to \mathcal{Y}$ is a jointly local, 
	sequentially continuous function
	$H \colon \mathcal{X}\times[0,1](\mathcal{A})\to \mathcal{Y}$ such that $H(X,0)=f(X)$ and $H(X,1)=g(X)$.
	Jointly local means $H(\sum_{j \in \mathbb{N}} 1_{A_j} X_j, \sum_{j \in \mathbb{N}} 1_{A_j} t_j)=
	\sum_{j \in \mathbb{N}} 1_{A_j} H(X_j,t_j)$ for any partition $(A_j)_{j\in\mathbb{N}}$, $(X_j)_{j\in\mathbb{N}}$ in $\mathcal{X}$ and 
	$(t_j)_{j\in\mathbb{N}}$ in $[0,1](\mathcal{A})$. 
	Sequential continuity of $H$ is therefore $H(X_n,t_n)\to H(X,t)$ whenever $X_n\to X$ and $t_n\to t$ both $P$-almost surely 
	for $X_n,X\in\mathcal{X}$ and $t_n,t\in[0,1](\mathcal{A})$.
\end{definition}

\begin{lemma}
	The identity function of the sphere is not $L^0$-homotop to a constant function.
\end{lemma}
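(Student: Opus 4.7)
The plan is to argue by contradiction, leveraging the fixed point theorem for sequentially closed, bounded, $L^0$-convex subsets of the previous subsection applied to $\mathcal{B}(d)$. Suppose an $L^0$-homotopy $H \colon \mathcal{S}(d-1) \times [0,1](\mathcal{A}) \to \mathcal{S}(d-1)$ exists with $H(\cdot, 0) = \mathrm{id}$ and $H(\cdot, 1) \equiv Y_0$ for some $Y_0 \in \mathcal{S}(d-1)$. From $H$ I will manufacture a retraction $r \colon \mathcal{B}(d) \to \mathcal{S}(d-1)$ which is local, sequentially continuous, and satisfies $r|_{\mathcal{S}(d-1)} = \mathrm{id}$. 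Then the antipodal composition $\phi(X) := -r(X)$ is a local, sequentially continuous self-map of $\mathcal{B}(d)$ (its image even lies in $\mathcal{S}(d-1)$), and the Brouwer theorem of the previous subsection supplies a fixed point $X^{\ast}$. From $\norm{X^{\ast}} = \norm{r(X^{\ast})} = 1$ it follows that $X^{\ast} \in \mathcal{S}(d-1)$, hence $r(X^{\ast}) = X^{\ast}$, which forces $X^{\ast} = -X^{\ast}$ and thus $X^{\ast} = 0$, contradicting $\norm{X^{\ast}} = 1$.

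The concrete retraction I have in mind is $r(X) := H(G(X),\, 1 - \norm{X})$, where on the partition $A := \set{\norm{X} > 0}$, $A^c = \set{\norm{X} = 0}$ I set $G(X) = X/\norm{X}$ on $A$ and $G(X) = Y_0$ on $A^c$. On $A^c$ the second argument of $H$ equals $1$ and $H(\cdot,1) \equiv Y_0$, so the specific extension of $G$ on $A^c$ is irrelevant, and $r$ is well-defined by pasting along the partition. Locality of $r$ follows from locality of $H$, of $\norm{\cdot}$, and of the pasting construction. For $\norm{X} = 1$ one obtains $r(X) = H(X, 0) = X$, so $r$ is the identity on $\mathcal{S}(d-1)$, as required.

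The delicate point, and the main obstacle I anticipate, is sequential continuity of $r$ at a point $X$ for which $\norm{X} = 0$ on a set $B$ of positive measure, since then $G(X_n)$ need not converge on $B$ even when $X_n \to X$ $P$-a.s. On the complementary set $\set{\norm{X} > 0}$, exhaustion by $A_k := \set{\norm{X} > 1/k}$ yields $\norm{X_n} > 1/(2k)$ eventually on $A_k$, so $G(X_n) \to G(X)$ $P$-a.s.\ on $A_k$ and joint sequential continuity of $H$ gives $r(X_n) \to r(X)$ there. On $B$, the plan is: given any subsequence $(X_{n_\ell})$, the bounded sequence $(G(X_{n_\ell}))$ in $\mathcal{S}(d-1)$ admits, by \cite[Corollary 3.9]{cheridito2012}, a conditional subsequence $(G(X_{n_{M_m}}))$ converging $P$-a.s.\ to some $Y^{\ast} \in \mathcal{S}(d-1)$; combined with $\norm{X_{n_{M_m}}} \to 0$ on $B$, joint sequential continuity of $H$ then yields $r(X_{n_{M_m}}) \to H(Y^{\ast},1) = Y_0 = r(X)$ on $B$. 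Using locality of $r$ to write $r(X_{n_{M_m}}) = \sum_j 1_{\set{M_m = j}} r(X_{n_j})$, the standard every-subsequence-has-a-sub-subsequence argument forces $r(X_n) \to r(X)$ $P$-a.s.\ on $B$ as well, completing the sequential continuity of $r$ and hence the proof.
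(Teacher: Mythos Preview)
Your overall strategy is exactly the one the paper uses: from a putative homotopy $H$ build a local, sequentially continuous retraction $r\colon\mathcal{B}(d)\to\mathcal{S}(d-1)$ with $r|_{\mathcal{S}(d-1)}=\mathrm{id}$, and then reach a contradiction via the antipodal map $X\mapsto -r(X)$ and the Brouwer fixed point theorem on $\mathcal{B}(d)$. The paper in fact isolates the no-retraction statement as a separate lemma, gives the antipodal/Brouwer argument for it, and then merely asserts that the homotopy lemma ``directly follows'' without spelling out the construction of $r$ from $H$. You have made that deduction explicit, which is good, but your argument for sequential continuity of $r$ on $B=\{\norm{X}=0\}$ has a genuine gap.

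The problem is the last step, where you invoke ``every subsequence has a convergent sub-subsequence'' to conclude $r(X_n)\to r(X)$ $P$-almost surely on $B$. That principle characterizes convergence in \emph{metrizable} topologies (e.g.\ convergence in probability), but $P$-a.s.\ convergence is not one: the typewriter sequence on $[0,1]$ has the property that every subsequence has a further (even ordinary) sub-subsequence converging a.s.\ to $0$, yet it does not converge almost surely. So even granting that every ordinary subsequence of $(r(X_n))$ admits a conditional sub-subsequence with a.s.\ limit $Y_0$ on $B$, the conclusion $r(X_n)\to Y_0$ a.s.\ on $B$ does not follow.

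The construction is easily repaired so that no limiting argument near $\norm{X}=0$ is needed at all. Replace the time parameter $1-\norm{X}$ by $\tau(X):=(2-2\norm{X})\wedge 1$, keep the same $G$, and set $r(X):=H\bigl(G(X),\tau(X)\bigr)$. One still has $r(X)=H(X,0)=X$ when $\norm{X}=1$, and now on $\{\norm{X}\le 1/2\}$ one has $\tau(X)=1$, whence $r(X)=H(G(X),1)=Y_0$ \emph{exactly}. Thus if $X_n\to X$ a.s.\ and $\omega\in\{\norm{X}=0\}$, then $\norm{X_n}(\omega)<1/2$ for all large $n$ and $r(X_n)(\omega)=Y_0(\omega)=r(X)(\omega)$ eventually. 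On $\{\norm{X}>0\}$ both $G(X_n)\to G(X)$ and $\tau(X_n)\to\tau(X)$ a.s., and one concludes by using joint locality of $H$ to freeze the inputs on $\{\norm{X}=0\}$ to constants before invoking sequential continuity. With this adjustment your antipodal/Brouwer argument goes through.
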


The proof is a consequence of the following lemma.

\begin{lemma}
	There does not exist a local, sequentially continuous function $f \colon \mathcal{B}(d)\to \mathcal{S}(d-1)$ which is the identity on $\mathcal{S}(d-1)$.
\end{lemma}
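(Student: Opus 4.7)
The plan is the classical no-retraction argument, adapted to the $L^0$-module setting. Suppose for contradiction that such a local, sequentially continuous retraction $f\colon \mathcal{B}(d)\to \mathcal{S}(d-1)$ exists, so that $f(X)=X$ for every $X\in\mathcal{S}(d-1)$. I want to produce from $f$ a self-map of the ball that has no fixed point, contradicting the fixed point proposition of the previous subsection.

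First I would check that $\mathcal{B}(d)$ satisfies the hypotheses of that proposition: it is $L^0$-convex (convex combinations of elements of norm $\leq 1$ have norm $\leq 1$ by the $L^0$-triangle inequality), it is bounded (by $1\in L^0$), and it is sequentially closed ($P$-almost sure convergence preserves the pointwise norm inequality $\|X_n\|\leq 1$).

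Next I would define $g\colon \mathcal{B}(d)\to \mathcal{B}(d)$ by $g(X)=-f(X)$. Negation is local and sequentially continuous, so composition with $f$ preserves these properties; moreover $g$ takes values in $\mathcal{S}(d-1)\subseteq \mathcal{B}(d)$. By the fixed point proposition there exists $Y\in\mathcal{B}(d)$ with $g(Y)=Y$, that is, $Y=-f(Y)$. Since $f(Y)\in\mathcal{S}(d-1)$ we have $\norm{Y}=\norm{f(Y)}=1$ $P$-almost surely, so $Y\in\mathcal{S}(d-1)$. The retraction property then gives $f(Y)=Y$, hence $Y=-Y$, i.e.\ $2Y=0$, so $Y=0$. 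This contradicts $\norm{Y}=1$, completing the proof.

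The argument is essentially routine once the previous fixed point proposition is in hand; the only step where the $L^0$-setting could cause trouble is verifying that $\mathcal{B}(d)$ is a legitimate domain for the fixed point theorem, but $L^0$-convexity, boundedness, and sequential closedness all follow at once from the corresponding scalar inequalities applied pointwise $P$-almost surely. The homotopy corollary is then immediate: if $H$ were an $L^0$-homotopy from the identity on $\mathcal{S}(d-1)$ to a constant map, a standard radial rescaling of $H$ would produce exactly such a retraction of $\mathcal{B}(d)$ onto $\mathcal{S}(d-1)$, which we have just ruled out.
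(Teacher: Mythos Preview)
Your argument is correct and essentially identical to the paper's: both compose the hypothetical retraction with the antipodal map to obtain a local, sequentially continuous self-map $X\mapsto -f(X)$ of $\mathcal{B}(d)$, apply the fixed point result for closed bounded $L^0$-convex sets, observe that the fixed point lies on $\mathcal{S}(d-1)$, and derive the contradiction $Y=-Y$ with $\norm{Y}=1$. The only differences are cosmetic---you verify the hypotheses on $\mathcal{B}(d)$ explicitly and append the homotopy corollary, which the paper states separately.
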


\begin{proof}
	Suppose there is this local, sequentially continuous function $f$.
	Define $g \colon \mathcal{S}(d-1)\to \mathcal{S}(d-1)$ by $g(X)=-X$.
	Then the composition $g\circ f \colon \mathcal{B}(d)\to \mathcal{B}(d)$, which actually maps to $\mathcal{S}(d-1)$, is local and sequentially continuous. 
	Therefore, this has a fixed point $Y$ which has to be in $\mathcal{S}(d-1)$, since this is the image of $g\circ f$.
	But we know $f(Y)=Y$ and $g(Y)=-Y$ and hence $g\circ f(Y)=-Y$. 
	Therefore, $Y$ cannot be a fixed point (since $0\notin \mathcal{S}(d-1)$) which is a contradiction.
\end{proof}

It directly follows that the identity on the sphere is not $L^0$-homotop to a constant function.
In the case $d=1$ we get the following result which is  the $L^0$-version of an $L^0$-intermediate value theorem.

\begin{lemma}
	Let $X,\overline{X} \in L^0$ with $X\leq \overline{X}$. Let $\left[X,\overline{X}\right]=\left\{Z\in L^0 \colon X\leq Z\leq \overline{X}\right\}$ and 
	$f \colon \left[X,\overline{X}\right]\to L^0$ be a local, sequentially continuous function. 
	Define $A=\set{f(X)\leq f\left(\overline{X}\right)}$.
	Then for every  
	$Y \in \left[1_A f(X)+1_{A^c} f\left(\overline{X}\right),1_A f\left(\overline{X}\right)+1_{A^c}f(X)\right]$ there exists $\overline{Y} \in \left[X,\overline{X}\right]$ with $f\left(\overline{Y}\right)=Y$.
\end{lemma}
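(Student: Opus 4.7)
The plan is to apply the fixed point theorem just proved for $L^0$-convex, sequentially closed, bounded subsets of $(L^0)^d$ (with $d=1$) to the order interval $[X,\overline{X}]$. This interval is $L^0$-convex, sequentially closed, and $L^0$-bounded, since $\norm{Z}\leq \abs{X}\vee \abs{\overline{X}}$ for every $Z\in[X,\overline{X}]$. We will construct a self-map $g$ of $[X,\overline{X}]$ whose fixed points $\overline{Y}$ are precisely points with $f(\overline{Y})=Y$.

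To package both orientations $f(X)\leq f(\overline{X})$ (on $A$) and $f(X)>f(\overline{X})$ (on $A^c$) into a single map, introduce the $L^0$-sign $s=1_A-1_{A^c}$ which takes values in $\{-1,+1\}$. The hypothesis on $Y$ translates uniformly into $s(Y-f(X))\geq 0$ and $s(Y-f(\overline{X}))\leq 0$ on all of $\Omega$. Define
\begin{equation*}
g(Z)=\bigl(Z+s\,(Y-f(Z))\bigr)\vee X \wedge \overline{X}, \qquad Z\in[X,\overline{X}].
\end{equation*}
Since addition, multiplication by the fixed indicators in $s$, and the lattice operations $\vee,\wedge$ all commute with $\sigma$-combinations and preserve $P$-almost sure convergence, the locality and sequential continuity of $f$ pass to $g$. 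By construction $g$ takes values in $[X,\overline{X}]$, so the proposition on $L^0$-convex, sequentially closed, bounded sets yields $\overline{Y}\in[X,\overline{X}]$ with $g(\overline{Y})=\overline{Y}$.

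It remains to show $f(\overline{Y})=Y$. Set $W=\overline{Y}+s(Y-f(\overline{Y}))$ and partition $\Omega$ into $C_1=\{X\leq W\leq \overline{X}\}$, $C_2=\{W<X\}$, $C_3=\{W>\overline{X}\}$. On $C_1$ the projection is inactive, so $\overline{Y}=W$, forcing $s(Y-f(\overline{Y}))=0$; since $s^2=1$ this gives $Y=f(\overline{Y})$ on $C_1$. On $C_2$ one has $\overline{Y}=X$, hence $s(Y-f(X))<0$: restricting to $C_2\cap A$ produces $Y<f(X)$ and to $C_2\cap A^c$ produces $Y>f(X)$, both contradicting the bounds on $Y$, so $P(C_2)=0$. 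The symmetric argument with $\overline{X}$ in place of $X$ rules out $C_3$. Therefore $C_1=\Omega$ $P$-a.s., and $f(\overline{Y})=Y$.

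The main conceptual hurdle is to build a single $L^0$-local, sequentially continuous self-map that handles both orientations of the intermediate-value problem at once, without invoking a measurable selection between a map adapted to $A$ and one adapted to $A^c$; the sign $s$ accomplishes this automatically, after which the projection argument mirrors the classical $\mathbb{R}$-valued proof of the intermediate value theorem via a Brouwer fixed point of a truncated displacement.
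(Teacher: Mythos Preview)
Your argument is correct and rests on the same mechanism as the paper's proof: apply the $(L^0)^d$ Brouwer fixed point theorem to the projected displacement map $Z\mapsto \mathrm{proj}_{[X,\overline X]}\bigl(Z+Y-f(Z)\bigr)$ and then rule out, at a fixed point, the events on which the projection is active. The paper, however, reaches this via two reductions that you bypass. First, it uses locality to split into $A$ and $A^c$, treats only the case $A=\Omega$, and glues at the end; your sign $s=1_A-1_{A^c}$ absorbs this split into a single formula. Second, the paper first proves the strict case $f(X)<Y<f(\overline X)$ (so that $\overline Y=X$ or $\overline Y=\overline X$ gives a contradiction) and afterwards patches the boundary events $\{Y=f(X)\}$ and $\{Y=f(\overline X)\}$ by another gluing. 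Your observation that on $C_2=\{W<X\}$ one obtains the \emph{strict} inequality $s(Y-f(X))<0$, directly contradicting the non-strict hypothesis $s(Y-f(X))\geq 0$, makes this second split unnecessary. The net effect is a shorter and cleaner execution of the same idea.
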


\begin{proof}
    Since $f$ is local, it is sufficient to prove the case for $f(X)\leq f\left(\overline{X}\right)$ which is $A=\Omega$. For the general case we would consider $A$ and $A^c$ separately,
    obtain $1_Af\left(\overline{Y}_1\right)=1_AY$, $1_{A^c}f\left(\overline{Y}_2\right)=1_{A^c}Y$ and by locality we have $f\left(1_A \overline{Y}_1+1_{A^c}\overline{Y}_2\right)=Y$. So suppose $Y\in \left[f\left(X\right),f\left(\overline{X}\right)\right]$ in the rest of the proof.

    Let first $f(X)<Y<f\left(\overline{X}\right)$. 
    Define the function $g \colon \left[X,\overline{X}\right] \to \left[X,\overline{X}\right]$ by  
    \begin{equation*}
        g(V):=p(V-f(V)+Y) \quad \text{with} \quad p(Z)=1_{\set{Z\leq \vphantom{\overline{X}}X}}X+1_{\set{X\leq Z \leq \overline{X}}}Z+1_{\set{\overline{X}\leq Z}}\overline{X}.
    \end{equation*}
    Notice that as a sum, product, and composition of local, sequentially continuous functions, $g$ is so as well.
    Hence, $g$ has a fixed point $\overline{Y}$. 
    If $\overline{Y}=X$, it must hold $X-f(X)+Y \leq X$ which means $Y\leq f(X)$ which is a contradiction.
    If $\overline{Y}=\overline{X}$, it follows $f\left(\overline{X}\right)\leq Y$, which is also a contradiction.
    Hence, $\overline{Y}=\overline{Y}-f\left(\overline{Y}\right)+Y$ which means $f\left(\overline{Y}\right)=Y$.

    If $Y=f(X)$ on $B$ and  $Y=f\left(\overline{X}\right)$ on  $C$, it holds that $f(X)<Y<f\left(\overline{X}\right)$ on $(B\cup C)^c=:D$.
    Then we find  $\overline{Y}$ such that $f\left(\overline{Y}\right)=Y$ on $D$.
    In total $f\left(1_B X+1_{C\setminus B}  \overline{X}+1_D \overline{Y}\right)=1_B f(X)+1_{C\setminus B}f\left(\overline{X}\right)+1_D f\left(\overline{Y}\right)=Y$.
    This shows the claim for general $Y \in \left[f(X),f(\overline{X})\right]$.
\end{proof}

\bibliographystyle{econometrica}
\bibliography{bibfix}
\end{document}